\newcommand {\bC}{\mathbb{C}}
\newcommand {\IN}{\mathbb{N}}
\newcommand {\IR}{\mathbb{R}}
\newcommand{\R}{\mathbb{R}}
\newcommand{\re}{\mathbb{R}}
\newcommand{\rek}{\mathbb{R}^{k}}
\newcommand {\B}{\mbox{${\mathcal B}$}}
\newcommand{\cB}{\mathcal B}
\newcommand{\cC}{\mathcal {C}}
\newcommand {\F}{\mbox{${\mathcal F}$}}
\newcommand{\cF}{\mathcal F}
\newcommand{\tf}{\mathcal{F}}
\newcommand{\cI}{\mathcal I}
\newcommand{\cJ}{\mathcal J}
\newcommand{\cL}{\mathcal L}
\newcommand {\cO}{\mathcal O}
\newcommand{\cX}{\mathcal X}
\newcommand{\cs}{\mathcal{S}}
\newcommand{\Lam}{\Lambda}
\newcommand{\half}{\frac{1}{2}}
\newcommand{\beq}{\begin{equation}}
\newcommand{\eeq}{\end{equation}}
\newcommand{\beqn}{\begin{equation*}}
\newcommand{\eeqn}{\end{equation*}}
\newcommand{\txind}{(t,x)\in [0,T]\times D}
\newtheorem{stat}{Statement}[section]
  \newtheorem{prop}[stat]{Proposition}
  \newtheorem{thm}[stat]{Theorem}
  \newtheorem{lemma}[stat]{Lemma}
  \newtheorem{remark}[stat]{Remark}
  \newtheorem{def1}[stat]{Definition}%[section]
\numberwithin{equation}{section}
\numberwithin{subsection}{section}
\begin{document}
%\frontmatter

%\begin{titlepage} % Suppresses headers and footers on the title page

{	\centering % Centre everything on the title page
	
%	\scshape % Use small caps for all text on the title page
	
%	\vspace*{\baselineskip} % White space at the top of the page
	
	%------------------------------------------------
	%	Title
	%------------------------------------------------
	
%	\rule{\textwidth}{1.6pt}\vspace*{-\baselineskip}\vspace*{2pt} % Thick horizontal rule
%	\rule{\textwidth}{0.4pt} % Thin horizontal rule
	
%	\vspace{0.65\baselineskip} % Whitespace above the title
	
	{ \scshape \bf \large Sample path properties of parabolic SPDEs
with non constant coefficients}\footnote[1]{Version October 30, 2024

\noindent{\bf 2020 Mathematics Subject Classifications.} Primary 60H15; Secondary 60H20, 60G60, 60G15.

\noindent{\bf Keywords.} Parabolic stochastic partial differential equations, factorization method, sample path properties.

\noindent{\bf Statements and Declarations.} The first author was supported in part by the Swiss National Foundation for Scientific Research.
The second author was supported in part by the grant PDI2020-118339GB-100 from the Ministerio de Ciencia e Innovaci\'on, Spain, and by 2021 SGR 00299, Generalitat de Catalunya.

There are no competing interests.

} % Publication date  % Title
	
%	\vspace{0.75\baselineskip} % Whitespace below the title
	
%	\rule{\textwidth}{0.4pt}\vspace*{-\baselineskip}\vspace{3.2pt} % Thin horizontal rule
%	\rule{\textwidth}{1.6pt} % Thick horizontal rule
	
%	\vspace{2\baselineskip} % Whitespace after the title block
	
	%------------------------------------------------
	%	subtitle
	%------------------------------------------------
	
	%A Number of Fascinating and Life-changing Templates Presented in a Clear and Concise Way % Subtitle or further description
	
%	\vspace*{3\baselineskip} % Whitespace under the subtitle
	
	%------------------------------------------------
	%	Editor(s)
	%------------------------------------------------

	\vspace{0.5 in} % Whitespace before the editors
	
	{\scshape {\large Robert C.~Dalang}\\ \'Ecole Polytechnique F\'ed\'erale de Lausanne} % Authors list
	\vspace{0.5\baselineskip}
	
		{\scshape and}
		\vspace{0.5\baselineskip}
			
	{\scshape{\large Marta Sanz-Sol\'e}\\ University of Barcelona} % Authors list
	
	\vspace{0.5 in} % Whitespace below the editor list
	
   {\em In memory of Giuseppe Da Prato}

%	\vfill % Whitespace between editor names and publisher logo
	
	%------------------------------------------------
	%	Publisher
	%------------------------------------------------
	
	%\plogo % Publisher logo
	
%	\vspace{0.3\baselineskip} % Whitespace under the publisher logo
	
	%$\copyright$ Robert C. Dalang and Marta Sanz-Solé
	
%	\vspace{0.4\baselineskip}
	
	%{\LaTeXe} (text):

	%{\large publisher} % Publisher

%\end{titlepage}

\thispagestyle{empty}
%\markboth{R.C.~Dalang and M.~Sanz-Sol\'e}{~}
%\markright{R.C.~Dalang and M.~Sanz-Sol\'e}

%%%%%%%organisation of the document
%\frontmatter
~

%\vskip 1 in

%\eject

%\thispagestyle{empty}

%Dedication
}
%\eject

%\tableofcontents
%\addcontentsline{toc}{chapter}{Preface}
%\addcontentsline{toc}{chapter}{Bibliography}
%\listoffigures
%\include{preface}
%\mainmatter
%\include{chapter01}\include{chapter02}\include{chapter03}\include{chapter04}\include{chapter05}\include{chapter06}
%\include{temporary}

\begin{abstract}
We consider an SPDE driven by a parabolic second order partial differential operator with a nonlinear random external forcing defined by a Gaussian noise that is white in time and has a spatially homogeneous covariance. We prove existence and uniqueness of a random field solution to this SPDE. Our main result concerns the space-time sample path regularity of its solution.
\end{abstract}

\section{Introduction}
\label{s-1}

In this paper, we consider second order partial differential operators of the type
\beq
\label{5.300}
\cL = \frac{\partial}{\partial t} - \sum_{i,j=1}^k
 a_{i,j}(t,x)\frac{\partial^2}{\partial x_i \partial x_j} + \sum_{i=1}^k b_i(t,x)\frac{\partial}{\partial x_i} + c(t,x),
 \eeq
on $\re_+ \times D$, where $D \subset \rek$, $k\ge 1$, is a bounded or unbounded domain with smooth boundary, and $a_{i,j}(t,x)$, $b_i(t,x)$ and $c(t,x)$ are functions such that the fundamental solution (or the Green's function) of the PDE $\cL u=0$ (with boundary conditions if $D$ has boundaries) is a function. Fix $T > 0$. We are interested in the non-homogeneous {\em stochastic PDE} (SPDE)
\beq
\label{5.1}
\cL u(t,x) = \sigma(t,x,u(t,x))\, \dot W(t,x) + b(t,x,u(t,x)),
\eeq
$t\in\,]0,T]$, $x\in D$, with given initial conditions, and, if $D$ has boundaries, with boundary conditions. Here $\sigma$ and $b$ denote real-valued functions and $\dot W$ is a Gaussian noise that is white in time and with a spatially homogeneous covariance.
We prove existence and uniqueness of random field solutions and optimal results on the regularity of their sample paths.

In this context, the formulation of the SPDE \eqref{5.1} relies on the theory of stochastic integration developed first in \cite{dalang-1999} and on further extensions (see e.g.~\cite{dalang-quer-s-2011} and references herein). The regularity of the sample paths is proved using the factorization method of Da Prato, Kwapie\'n and Zabczyk introduced in \cite{dp-k-z-1987}. In this way, we extend and improve results of \cite{ss-v2003} and \cite{ss-s-2002}. Similar questions have been addressed in \cite{veraar-2010} in the setting of SPDEs with solutions in Banach spaces.

This article consists of four sections. In Section \ref{s2}, we fix the setting, the definitions and the hypotheses, and recall the properties of the noise $\dot W$ and of the stochastic integral with respect to $\dot W$. In Section \ref{s3}, we prove an existence and uniqueness theorem for random field solutions. The optimal H\"older-continuity, jointly in time and in space, of the random field solutions is proved in Section \ref{ch6-s1}.

Aiming to be concise and pedagogical, and to avoid too many technicalities, we opt for a presentation in which the hypotheses could be in part relaxed. In particular, concerning the noise, we assume that the spatial covariance is absolutely continuous with respect to Lebesgue measure, and the fundamental solution (or Green's function) $\Gamma(t,x;s,y)$ relative to $\cL$ is a function that is dominated by a homogeneous function $S(t-s, x-y)$ (see hypotheses $({\bf H_\Gamma})$ and $({\bf H_\Lambda})$).
 Although these hypotheses cover many important instances studied in the literature, such as Riesz, Bessel and fractional-type covariances, and (uniformly) parabolic operators $\cL$, Theorem \ref{ch5-t2} also holds under weaker conditions (see Remark \ref{s2-rem-1}).
 For the study of the regularity of the solutions via the factorization method, the positivity and semigroup property of $\Gamma(t,x;s,y)$, as well as  Gaussian bounds such as \eqref{gauss}, are indispensable. These conditions are ensured by \eqref{parabolic201} or by variants of this condition,
 such as for instance Petrovski\u{\i}-parabolicity, described in \cite[Chapter I]{ez}. Hence,
 it is unlikely that the assumptions of Theorem \ref{ch5-t30} can be significantly weakened.

%%%%%end introduction

\section{Formulating the problem}
\label{s2}

Consider the SPDE \eqref{5.1} with partial differential operator $\cL$ given in \eqref{5.300}, defined on $[0,T]\times D$, where $T>0$ and $D \subset \rek$, $k\ge 1$, are as in Section \ref{s-1}. % is a bounded or unbounded domain with smooth boundary.
The coefficients $\sigma$ and $b$ are defined on $[0,T]\times D\times \re$ and take values in $\re$.
Concerning $\dot W$, we consider a Gaussian noise that is white in time and spatially homogeneous. More specifically,
on a complete probability space $(\Omega, \mathcal{F},P)$, we define a family of mean zero Gaussian random variables
$\dot W = \{W(\phi),\, \phi \in \mathcal{C}_0^\infty (\mathbb{R}^{k+1})\}$ with covariance functional of the form
\begin{equation}
\label{ch2-1}
E(W(\phi_1) W(\phi_2)) = \int_0^\infty dt\, \langle \Lam, \phi_1(t)\star \tilde\phi_2(t) \rangle,
\end{equation}
where ``$\star$" denotes convolution in the spatial variable and $\tilde \phi (t,x):= \phi(t,-x)$.
In the above, $\Lam$ is a non-negative definite and symmetric distribution in  $\mathcal{D}^\prime(\rek)$.
It is therefore the Fourier transform of a non-negative, tempered, symmetric measure $\mu$ on $\mathbb{R}^{k}$, called the {\em spectral measure}, that is,
\beqn
   \langle \Lambda, \phi\rangle =\int_{\rek} \tf \phi(\xi)\, \mu(d\xi),
\eeqn
for all $\phi$ belonging to the space $\cs(\rek)$ of rapidly decreasing $\cC^\infty$-functions. Here,
\beqn
   \tf \phi(\xi) = \int_{\rek} e^{-i \xi x}\, \phi(x)\, dx.
\eeqn

Using elementary
properties of the Fourier transform, we see that
\beq
\label{ch2-2a}
  E(W(\phi_1) W(\phi_2)) = \int_0^\infty dt \int_{\mathbb{R}^{k}}  \mu(d\xi)\, \mathcal{F} \phi_1(t)(\xi)\, \overline{\mathcal{F} \phi_2(t)(\xi)} ,
\eeq
where $\overline z$ denotes the complex conjugate of $z \in \bC$.
In the particular case where $\Lambda$ is a measure on $\rek$ that is absolutely continuous with respect to Lebesgue measure with (necessarily symmetric) density function $f$, we have
\beq
\label{idcov}
\int_{\mathbb{R}^{k}}  \mu(d\xi)\, \mathcal{F} \phi_1(t)(\xi)\, \overline{\mathcal{F} \phi_2(t)(\xi)}
=\int_{\mathbb{R}^{k}} dx \int_{\mathbb{R}^{k}} dy\, \varphi_1(t,x) f(x-y)\varphi_2(t,y),
\eeq
and formula \eqref{ch2-1} (and \eqref{ch2-2a}) become
\beq
\label{ch2-2b}
  E(W(\phi_1) W(\phi_2)) = \int_0^\infty dt \int_{\mathbb{R}^{k}} dx \int_{\mathbb{R}^{k}} dy\, \varphi_1(t,x) f(x-y)\varphi_2(t,y).
\eeq

Throughout this article, we will assume the following set of hypotheses.
\vskip 12pt

\noindent {\em Assumptions on the fundamental solution}
\medskip

\noindent $({\bf H_{\Gamma}})$  The functions $a_{i,j}(t,x)$, $b_i(t,x)$, $c(t,x)$ from $[0,T]\times D$ into $\re$ are $\cB_{[0,T]}\times \cB_D$-measurable and such that the fundamental solution $\Gamma$ of $\cL$ satisfies the following:
\begin{description}
\item{(a)} $\Gamma$ is a function from $$\{(t,x,s,y)\in [0,T]\times D\times [0,T]\times D: 0\le s<t\le T\}$$ into $\re$, jointly Borel measurable, that is, $\cB_{[0,T]}\times \cB_{D}\times \cB_{[0,T]}\times \cB_{D}$-measurable.
\item{(b)} There is a Borel function $S: [0,T]\times \rek\longrightarrow \re_+$ satisfying $S(t,\ast)\in L^1(\rek)$ for all $t\in[0,T]$, and $ C < \infty$ such that for all $(t, x), (s, y) \in [0, T] \times D$,
\beq
\label{HGamma}
 \vert\Gamma(t,x;s,y)\vert \le C S(t-s,x-y).
 \eeq
% where $C$ is a constant not depending on $(t,x)$.

 Further,
\beq\label{parabolic200}
\int_0^T ds \int_{\rek} \mu(d\xi)\, |\tf S(s)(\xi)|^2 < \infty
\eeq
and
\beq \label{parabolic200-bis}
   \int_0^T ds \int_{\rek} dy\,S(s,y) < \infty.
\eeq
\end{description}
\medskip

Notice that \eqref{parabolic200} is a joint condition on $\cL$ (through $\Gamma$) and the noise $\dot W$ (through its spectral measure $\mu)$.

An important class of operators $\cL$ in \eqref{5.300} are {\em second-order uniformly parabolic operators}. They satisfy
\beq
\label{parabolic201}
\sum_{i,j=1}^k a_{i,j}(t,x) y_i y_j\ge \rho\,  |y|^2,
\eeq
 for all $y\in D$, with $\rho>0$ (see \cite{ez}).
 In this case, it is known that under suitable regularity conditions on the functions $a_{i,j}(t,x), b_i(t,x), c(t,x)$, the fundamental solution $\Gamma$ has the following properties:
\begin{description}
 \item{(i)} $\Gamma(t,x;s,y)>0$;
 \item{(ii)}\ {\em Semigroup property}: for any $0<s<r<t$, $x,y\in D$,
 \beq
 \label{convol}
  \Gamma(t,x;s,y) = \int_{D} dz\  \Gamma(t,x;r,z)\Gamma(r,z;s,y);
  \eeq
 \item {(iii)}\ {\em Gaussian bounds:}
 \beq
 \label{gauss}
 \vert \partial_x^\mu \partial_t^\nu\Gamma (t,x;s,y)\vert \le C(t-s)^{-\frac{k+|\mu|+2\nu}{2}} \exp\left[-c\, \frac{|x-y|^2}{t-s}\right],
 \eeq
 where $\mu=(\mu_1, \ldots, \mu_k)\in{\IN}^d$, $\nu\in \IN$, and $|\mu|+2\nu\le 2$, with $|\mu|= \sum_{j=1}^k \mu_j$
 (see, e.g.~\cite[p. 658]{aronson-1968}, \cite[Theorem VI.6, p. 193]{ez}, \cite[Theorem 1.1, p. 183]{ei70} or \cite[Chapter 9, Theorem 8]{friedman64}).% and Ladyzhenkaya-Solonnikov-Uraltseva).
 \end{description}

In the particular case $|\mu|+\nu=0$, \eqref{gauss} reads
 \beq
 \label{gauss-simple}
\vert\Gamma (t,x;s,y)\vert = \Gamma (t,x;s,y)\le C(t-s)^{-\frac{k}{2}} \exp\left[-c\, \frac{|x-y|^2}{t-s}\right].
\eeq
This yields property \eqref{HGamma} with
\beq
\label{relevant}
S(t,x): = t^{-\frac{k}{2}} \exp\left[-c\, \frac{|x|^2}{t}\right],
\eeq
which clearly satisfies $S(t,\ast)\in L^1(D)$.
From  the properties of the Gaussian distribution, we deduce
that
\beq
\label{boundgauss}
\int_{\rek} S(s,y)\, dy\le C < \infty,
\eeq
with a constant $C$ that does not depend on $s\in[0,T]$ and thus, \eqref{parabolic200-bis} is satisfied.

%In order to give a meaning to the stochastic integral that will appear in the formulation of the SPDE \eqref{5.1}, we need conditions relating the fundamental solution $\Gamma$ and the covariance distribution $\Lambda$ (or its spectral measure $\mu$). We will consider two possible settings and hypotheses, as formulated below.
\vskip 12pt

\noindent{\em Assumptions on the covariance measure}
\medskip

\noindent $({\bf H_{\Lambda}})$\ The covariance measure $\Lambda$ is such that
$\Lambda(dx) = f(x)\, dx$, $\mu(d\xi)= (\tf f)(\xi)\, d\xi$, where $f : \rek\rightarrow \re_+$ is a real-valued function which satisfies {\em one of} the following two conditions:
\begin{description}
\item {~~(i)} $f(x)<\infty$ if and only if $x\ne 0$;
\item{or} ~
\item {~~(ii)}  $\tf f \in L^\infty(\rek)$, and $f(x)< \infty$ when $x\ne 0$.
\end{description}
%\smallskip

%Moreover, $\mu=\tf^{-1}\Lambda$ and $\Gamma$ satisfy
%\begin{align}
%&\int_0^T dt \int_{\rek} \mu(d\xi)
%|\tf |\Gamma(t)|(\xi)|^2 < \infty, \label{5.2}\\
%& \int_0^T dt \int_{\rek} |\Gamma(t)|(dx) < \infty. \label{5.2b}
%\end{align}

%\noindent $({\bf H_{\Lambda}-2})$\ The covariance measure $\Lambda$ is a non-negative definite symmetric distribution.
%In addition \eqref{5.2} and \eqref{5.2b} hold.
%%%Concerning the fundamental solution, we describe two possible situations.

%\noindent{\bf(F1)} The fundamental solution is a function $\Gamma: [0,T]\rightarrow \mathcal{M}_b(\rek)$, where $\mathcal{M}_b(\rek)$ denotes the space of signed measures on $\mathcal{B}(\rek)$ with finite total variation.
%\medskip

%\noindent{\bf(F2)}  For each $t\in [0,T]$, $\Gamma(t)$ is a signed measure and the total variation measure $|\Gamma(t)| \in \mathcal{O}^\prime_C(\IR^k)$, where $\mathcal{O}^\prime_C(\IR^k)$ denotes the space of distributions with rapid decrease.
% End Hypotheses on $\Gamma$
\medskip

\noindent{\em Assumptions on the functions $\sigma$ and $b$}
\medskip

\noindent${\bf(H_L)}$\ The functions $\sigma$ and $b$ are defined on $[0,T]\times D\times \re$ with values in $\re$ and are jointly measurable, that is, $\cB_{[0,T]}\times \cB_{D}\times \cB_{\re}$-measurable, and have the following two properties:
\smallskip

\noindent{\em Uniform Lipschitz continuity.} There exists  $C:= C(T,D) < \infty$
 such that for all $(s,y)\in [0,T]\times D$ and $z_1, z_2\in \re$,
 \beqn
|\sigma(s,y,z_1) - \sigma(s,y,z_2)| + |b(s,y,z_1) - b(s,y,z_2)| \le C\,  |z_1-z_2|.
\eeqn
\smallskip

\noindent{\em Uniform linear growth.}\ There exists a constant $\bar c:=\bar c(T,D) < \infty$ such that for all $(s,y)\in [0,T]\times D$ and $z\in \re$,
\smallskip

\beqn
|\sigma(s,y,z)| + |b(s,y,z)| \le \bar c\, (1 + |z|).
\eeqn
%for any $(t,x)\in[0,T]\times \rek$, $w \in\re$.
\vskip 12pt

\noindent{\em Assumptions on the initial conditions}
\medskip

Let $[0,T]\times D \ni (t,x)\mapsto I_0(t,x)$ be the solution to the homogeneous PDE $\cL I_0(t, x) = 0$ with the same initial and boundary conditions as in \eqref{5.1}. We assume that:
\smallskip

\noindent $({\bf H_I})$\ The function  $[0,T]\times D \ni (t,x)\mapsto I_0(t,x)$ is Borel and bounded.
\medskip

For example, in the case where $D=\rek$, $\cL$ has constant coefficients and the initial value of the SPDE \eqref{5.1} is $u(0,x):=u_0(x)$, then $\Gamma(t, x; s, y) = \Gamma(t-s, x-y)$ and
\beq
\label{5.5}
I_0(t,x) = (\Gamma(t)\star u_0)(x).
\eeq
%\smallskip

%%%%%% EXAMPES OF IC
%If $L$ is of order 1 in $t$, such as the heat operator $L = \partial/\partial t - \Delta$, the initial condition will be
%\beq
%\label{5.ic1}
%u(0,x) = u_0(x), \ x\in \rek,
%\eeq
%where $u_0: \rek\to \re$ is a measurable function. If $L$ is of order 2 in $t$, as for example the wave operator $L =  \partial^2 /\partial t^2 - \Delta$, %then we have to impose the initial conditions
%\beq
%\label{5.ic2}
%u(0,x) = u_0(x), \    \frac{\partial u}{\partial t}(0,x) = v_0(x), \ x\in \rek,
%\eeq
%for some measurable functions $u_0, v_0: \rek\to \re$. We refer the reader to \cite[p.8]{ez} for a more complete description
%of initial value problems.

%We will assume that there is a fundamental solution $\Gamma$ of $L$ which is a distribution defined on $\re_+\times \rek$ (see Section \ref{ch4-s1}).
\vskip 12pt

\noindent{\em Examples of covariance measures satisfying $({\bf H_\Lambda})$}
\medskip

We describe some examples of covariances that have been extensively used in the study of SPDEs driven by spatially correlated Gaussian noises.
\medskip

\noindent{\bf Riesz kernels}. \ For $\beta\in\,]0,k[$, let $r_\beta: \rek \rightarrow [0,\infty]$ be defined by  $r_\beta(x) = \vert x \vert^{-\beta}$ for $x\in\rek\setminus \{0\}$, and $r_\beta(0) = +\infty$. The function $r_\beta$  %is called a {\em Riesz kernel.} It 
is related to the fundamental solution of the fractional power of the Laplacian: $(- \Delta)^{\frac{k - \beta}{2}}$ (see e.g. \cite[Chapter V, \S 1]{stein-1970}). Up to a multiplicative constant, $r_\beta$ is the Fourier transform %$\tf r_\beta$ is
of $\mu_\beta(d\xi) = r_{k - \beta}(\xi)\, d\xi$:
\beqn
%\label{6.3-frk}
 c_{k,\beta}\, \tf r_{k - \beta}(\xi) =    r_{\beta}(\xi),\quad \xi\in\rek
\eeqn
(see \cite[Chapter V, \S 1, Lemma 2 (a)]{stein-1970}).

Observe that for $\eta\in\, ]0,1]$,
\beqn
%\label{riesz-1}
\int_{\rek} \frac{\mu_\beta(d\xi)}{(1+|\xi|^2)^\eta} < \infty\ {\text{if and only if}}\ \beta\in\,]0, k \wedge (2\eta)[,
\eeqn
because up to a multiplicative constant, $\tf\mu_\beta(d\xi)= r_{\beta}(\xi) d\xi$.

This remark is relevant for checking conditions \eqref{parabolic200} and \eqref{5.311}. For example, if $\beta\in\,]0,2\wedge k[$ and $S(t,x)$ is as in \eqref{relevant}, then \eqref{parabolic200} holds.
\medskip

\noindent{\bf Bessel kernels}.\
For $\alpha>0$, let
\beqn
 f_\alpha(x) =
\int_0^\infty w^{(\alpha -k-2)/2} e^{-w-(\vert x \vert^2)/(4w)}\, dw, \quad {\text{if}}\ x\in \rek\setminus\{0\},
\eeqn
and  $f_\alpha(0) = +\infty$ if $\alpha\in\,]0,k]$, and $f_\alpha(0) = c(k,\alpha)$ if $\alpha>k$, where $c(k,\alpha)\in ]0,\infty[$.
The function $f_\alpha$ is %called a {\em Bessel kernel}. It is 
related to the fundamental solution of the partial differential operator $(I-\Delta)^{-\frac{\alpha}{2}}$ (see \cite[Chap. V, \S 4]{stein-1970}, or  \cite[p. 152]{folland}).

 The Fourier transform $\tf f_\alpha$ is the function
 \beqn
 %\label{6.3-bkf}
 \tf f_\alpha(\xi) = \Gamma_E\left(\frac{\alpha}{2}\right)(4\pi)^{k/2} \tf B_\alpha(\xi) = \Gamma_E\left(\frac{\alpha}{2}\right)(4\pi)^{k/2} (1+|\xi|^2)^{-\frac{\alpha}{2}},
 \eeqn
where $\Gamma_E$ denotes the Euler Gamma function (see \cite[Chap. V, \S 3, Proposition 2]{stein-1970})). Thus, the spectral measure $\tilde\mu_\alpha$ satisfies the following:
 for any $\eta\in ]0,1]$,
\beqn
\int_{\rek} \frac{\tilde\mu_\alpha(d\xi)}{(1+|\xi|^2)^\eta} < \infty\ {\text{if and only if}}\ \alpha>k-2\eta.
\eeqn
\smallskip

 \noindent{\bf Fractional kernels}.\ Let
    $H=(H_1, \ldots,H_k)\in\, \left]\frac{1}{2},1\right[^k$, with $\sum_{j=1}^k H_j > k-1$. Define
\beqn
f_H(x) = c_{k,H}\prod_{j=1}^k |x_j|^{2H_j-2},\     c_{k,H}=  \prod_{j=1}^k H_j(2H_j-1),\ x=(x_j)_j,
\eeqn
when  $\prod_{j=1}^k \xi_j\ne 0$, and $f_H(x)=+\infty$, otherwise.

The spectral measure is
\beqn
\mu_H(d\xi) = \tilde c_{k,H}  \prod_{j=1}^k |\xi_j|^{1-2H_j} d\xi\, ,
\eeqn
which implies that, for any $\eta\in ]0,1]$,
\beqn
\int_{\rek} \frac{\bar\mu_H(d\xi)}{(1+|\xi|)^\eta} < \infty\ {\text{if and only if}}\ \sum_{j=1}^k H_j>k-\eta.
\eeqn
%This example has been considered in \cite{hu-huang-nualart}.
Detailed computations that establish this equivalence can be found for instance in the extended version of \cite{millet-SanzSole-2021} (see arXiv:1911.03148v2).
\vskip 12pt

\noindent{\em The stochastic integral with respect to $\dot W$}
\medskip

We now recall the notion of stochastic integral with respect to the Gaussian noise $\dot W$ that will be used in the random field formulation of the SPDE \eqref{5.1} (see Definition \ref{ch5-d1} below). For this, we will first recall how we obtain a cylindrical Wiener process from $\dot W$.

On $\mathcal{C}_0^\infty (\rek)$ we consider the semi-inner product
\beq
\label{sec2-1}
\langle \varphi_1,\varphi_2\rangle_U = \int_{\mathbb{R}^{k}}  \mu(d\xi)\, \mathcal{F} \varphi_1(\xi)\, \overline{\mathcal{F} \varphi_2(\xi)}
\eeq
and the associated semi-norm $\Vert \cdot\Vert_U$, and  we identify elements $\varphi_1$, $\varphi_2$ such that $\Vert \varphi_1-\varphi_2\Vert_U=0$. We denote by $U$ the completion of the resulting space endowed with the norm $\Vert\cdot\Vert_U$, and define $U_T=L^2([0,T]; U)$.
The space $\mathcal{C}_0^\infty([0,T]\times \rek)$ is dense in $U_T$ for the norm
$$\Vert g\Vert_{U_T}= \left(\int_0^T \Vert g(s)\Vert_U^2\, ds\right)^{\half},\quad g \in U_T.$$
Using this fact and the Itô isometry, we can extend the process $\dot W$ to a Gaussian process $W = (W(\varphi),\, \varphi\in U_T)$ and, setting $W_t(\varphi) = W(1_{[0,t]}(\cdot)\varphi(\ast))$, we see that %for $t\in[0,T]$ and $\varphi\in U$,
the process $W = (W_t(\varphi),\, t\in[0,T], \varphi\in U)$ is a {\em standard cylindrical Wiener process} (that we still  denote by $W$). In particular, if $(e_j)_{j\ge 1}$ is a CONS of the Hilbert space $U$, then $(W_t(e_j),\, 0\le t\le T)$, $j\ge 1$, defines a sequence of independent Brownian motions.

%%%%%%
For its further use, we quote the following result from \cite[Theorem 3.5]{BGP}.

\begin{prop}
\label{ch2-incluU}
Assume that $\mu$ is absolutely continuous with respect to Lebesgue measure. Then the set
\beqn
\mathcal{S}^\prime_\mu(\IR^k)  = \left\{ \Gamma \in \mathcal{S}^\prime(\IR^k): \text{$\tf\Gamma$ is a function and } \int_{\rek} \mu(d\xi)\, \vert \tf \Gamma(\xi)\vert^2 < \infty\right\}
\eeqn
is included in $U$, and for such $\Gamma$,
\beq
\label{norma}
\Vert \Gamma\Vert_ U^2 = \int_{\rek} \mu(d\xi) \, \vert \tf \Gamma(\xi)\vert^2.
\eeq
\end{prop}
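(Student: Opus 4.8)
The statement to prove is Proposition~\ref{ch2-incluU}: when $\mu$ is absolutely continuous with respect to Lebesgue measure, the space $\mathcal{S}^\prime_\mu(\IR^k)$ of tempered distributions whose Fourier transform is a function square-integrable against $\mu$ is contained in $U$, and the $U$-norm of such a distribution is given by \eqref{norma}. This is quoted from \cite[Theorem 3.5]{BGP}, so really I just need to indicate the argument.

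The plan is to revisit the construction of $U$. Recall $U$ is the completion of $\mathcal{C}_0^\infty(\rek)$ (modulo $\|\cdot\|_U$-null elements) under the semi-norm $\|\varphi\|_U^2 = \int_{\rek}\mu(d\xi)\,|\tf\varphi(\xi)|^2$. So the statement amounts to identifying the abstract completion $U$ with the concrete Hilbert space $\mathcal{H}_\mu := \{g : \rek\to\bC \text{ measurable}: \int_{\rek}\mu(d\xi)\,|g(\xi)|^2<\infty,\ g = \bar{\tilde g}\text{ (reality)}\}/\!\!\sim$, equipped with $\langle g_1,g_2\rangle = \int_{\rek}\mu(d\xi)\,g_1(\xi)\overline{g_2(\xi)}$, via the map $\Gamma\mapsto \tf\Gamma$. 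Concretely, I would argue as follows. First, the map $\varphi\mapsto\tf\varphi$ from $(\mathcal{C}_0^\infty(\rek),\|\cdot\|_U)$ into $\mathcal{H}_\mu$ is an isometry by definition of $\|\cdot\|_U$. Second, $\mathcal{H}_\mu$ is complete (it is an $L^2$-space of a measure), so the isometry extends to an isometric embedding of $U$ into $\mathcal{H}_\mu$ — this already gives that $\|\Gamma\|_U^2 = \int\mu(d\xi)\,|\tf\Gamma(\xi)|^2$ for every $\Gamma$ that is a $\|\cdot\|_U$-limit of test functions, which is the content of \eqref{norma} once we know $\mathcal{S}^\prime_\mu(\rek)\subset U$. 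Third, and this is the real point, I would show the embedding is \emph{onto} a subspace containing all of $\{\tf\Gamma : \Gamma\in\mathcal{S}^\prime_\mu(\rek)\}$: given $\Gamma\in\mathcal{S}^\prime_\mu(\rek)$, one produces a sequence $(\varphi_n)\subset\mathcal{C}_0^\infty(\rek)$ with $\tf\varphi_n\to\tf\Gamma$ in $\mathcal{H}_\mu$, which by the isometry means $(\varphi_n)$ is $\|\cdot\|_U$-Cauchy, hence defines an element of $U$, and one checks this element is $\Gamma$ (as distributions, using that $\mu$ is a tempered measure so $\mathcal{H}_\mu$-convergence implies convergence in $\mathcal{S}^\prime$).

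For the third step, the approximation of $\tf\Gamma$ by Fourier transforms of test functions: since $\mu$ is absolutely continuous, $\mathcal{H}_\mu = L^2(\rek;\mu)$ is separable and $\mathcal{C}_0^\infty(\rek)$ (or $\mathcal{S}(\rek)$) is dense in it; one then approximates $\tf\Gamma$ by a smooth, compactly supported (in $\xi$) function $\psi$ with $\int\mu(d\xi)\,|\tf\Gamma - \psi|^2$ small — but $\psi$ need not be the Fourier transform of a $\mathcal{C}_0^\infty$ function (its inverse Fourier transform is Schwartz but not compactly supported). This is handled by a standard two-step density argument: mollify and truncate the inverse Fourier transform $\tf^{-1}\psi$ in the spatial variable to land in $\mathcal{C}_0^\infty(\rek)$, and control the resulting $\mathcal{H}_\mu$-error using that $\tf$ of a mollified truncation converges to $\psi$ in $L^2(\rek, dx)$ and $\mu$ is tempered (so has at most polynomial growth, which is absorbed by multiplying by a suitable polynomial weight or by first arranging $\psi$ to decay fast enough); one also symmetrizes to respect the reality constraint $\tf\varphi(\xi) = \overline{\tf\varphi(-\xi)}$, harmless since $\mu$ and $\tf\Gamma$ are symmetric.

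The main obstacle is precisely this density/approximation step — reconciling the abstract completion with the concrete $L^2(\mu)$ model, i.e. showing no ``extra'' elements appear in $U$ beyond those in $\mathcal{S}^\prime_\mu(\rek)$ and, conversely, that every element of $\mathcal{S}^\prime_\mu(\rek)$ is genuinely reached — and making sure the identification map is compatible with the $\mathcal{S}^\prime$ duality so that the abstract limit ``is'' $\Gamma$. Absolute continuity of $\mu$ is what makes this work cleanly (it forces $L^2(\mu)$ to behave like a weighted $L^2(dx)$ on the Fourier side, where test functions are dense); without it the Fourier transforms of $\mathcal{C}_0^\infty$ functions might fail to be dense in $L^2(\mu)$ and $U$ could be strictly smaller. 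Since all of this is carried out in \cite[Theorem 3.5]{BGP}, in the paper I would simply cite it and, at most, recall the isometry $\varphi\mapsto\tf\varphi$ and the identification $U\cong L^2(\rek;\mu)$ (restricted to symmetric elements) as the reason \eqref{norma} holds.
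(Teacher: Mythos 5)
The paper gives no proof of this proposition: it is quoted verbatim from \cite[Theorem 3.5]{BGP}, which is exactly what you propose to do in the end, so your treatment matches the paper's. Your accompanying sketch (the isometry $\varphi\mapsto\tf\varphi$ into $L^2(\mu)$, followed by density of Fourier transforms of test functions) is a reasonable reconstruction of the cited argument; the only loose point is the assertion that $L^2(\mu)$-convergence implies convergence in $\mathcal{S}^\prime(\rek)$, which is delicate when the spectral density is not bounded below, but since the proposition is simply cited, none of this is required in the paper.
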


%%%%%
In the sequel, we consider the probability space $(\Omega, \tf, P)$ on which the process $W$ is defined, along with the filtration $(\tf_t)_{t\ge 0}$ generated by $(W_t(\varphi),\, t\in[0,T], \varphi\in U)$. %(see Proposition \ref{ch2-p2}).
\begin{def1}
\label{si-d1}
For a  jointly measurable and adapted stochastic process $H\in L^2(\Omega\times [0,T]; U)$, the stochastic integral of $H$ with respect to the standard cylindrical Wiener process $W$ is
\beq
\label{si-1}
\int_0^T H_s\, dW_s:= \sum_{j=1}^\infty \int_0^T\langle H_s,e_j\rangle_U\, dW_s(e_j).
\eeq
\end{def1}
\smallskip

The stochastic integral will also be denoted by $\int_0^T \int_{\rek}H(s,y)\, W(ds,dy)$. It does not depend on the particular choice of $(e_j)_{j\ge 1}$.

Observe that the right-hand side of \eqref{si-1} is a series of one-dimensional It\^o integrals with respect to the independent Brownian motions $W_s(e_j)$, $j\ge 1$. It converges in $L^2(\Omega)$. Moreover, we have
the isometry property
\beq
\label{si-2}
E\left[\left(\int_0^T H_s\, dW_s\right)^2\right] = E\left(\int_0^T \Vert H_s\Vert^2_U\, ds\right) = E\left(\Vert H\Vert_{U_T}^2\right).
\eeq
The indefinite integral process $\left(\int_0^t H_s\, dW_s,\, t\in [0,T]\right)$, defined in the usual way, is a martingale with respect to the filtration
$(\tf_ t, \, t\in [0,T])$. By the independence of $W_t(e_j)$, $j\ge 1$, its quadratic variation is the process
\beq
\label{ch3-qv}
\int_0^t \Vert H_s\Vert^2_U\, ds, \quad t\in [0,T].
\eeq
\smallskip

\noindent{\em Examples of integrands}
\medskip

In the random field formulation of SPDEs, we will often find integrands $H$ of the type $G_{t,x}(s,y) = \Gamma(t,x;s,y) Z(s,y) \, 1_D(y)$,
%with fixed $t\in[0,T]$ and $x\in D$ and
$0\le s<t\le T$, $y\in \rek$, with fixed $(t, x) \in [0, T] \times D$, where $(Z(s,y),\,  (s,y)\in[0,T]\times D)$ is a jointly measurable and adapted process satisfying $\sup_{(s,y)\in[0,T]\times D}E(Z^2(s,y))<\infty$.
Observe that this condition, along with $({\bf H_{\Gamma}})$, implies that $G_{t,x}(s,\ast)$ is the density of a measure on $\rek$ with bounded total variation.

Assuming $({\bf H_{\Gamma}})$ and $({\bf H_{\Lambda}})$, we next show that $E\left(\Vert G_{t,x}\Vert_{U_T}^2\right)<\infty$ and consequently, the stochastic integral
\beq
\label{si-ex-1}
\int_0^T \int_{D} \Gamma(t,x;s,y) Z(s,y) \, W(ds,dy)
\eeq
is well-defined.

The proof relies on the following lemma quoted from \cite[Theorem 5.2, p. 481]{k-x-2009} (see also \cite[Corollary 3.4, p 426]{k-f-2013}):
\begin{lemma}
\label{l-1}
Assume $({\bf H_{\Lambda}})$. There is a constant $C_k$, depending on $k$ only, with the following property. Let $\Phi$ and $\Psi$ be signed measures on $\rek$ with bounded total variation. Then
\beq
\label{l-1-1}
\int_{\rek} \Phi(dx) \int_{\rek} \Psi(dy)\, f(x-y) = C_k \int_{\rek} \mu(d\xi)\, \tf \Phi(\xi) \overline{\tf \Psi(\xi)} .
\eeq
\end{lemma}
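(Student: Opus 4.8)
The strategy is to derive \eqref{l-1-1} from the elementary Parseval identity \eqref{idcov} by a Gaussian regularization that smooths $f$ and renders the spectral measure finite, followed by two monotone passages to the limit. Note first that, for real signed measures, both sides of \eqref{l-1-1} are real symmetric bilinear forms in $(\Phi,\Psi)$ --- on the right this uses $\overline{\tf\Psi(\xi)}=\tf\Psi(-\xi)$ and the symmetry of $\mu$ --- so by polarization it suffices to establish the equality on the diagonal $\Phi=\Psi=:\nu$, with $\nu$ a finite \emph{nonnegative} measure, both members being then read as elements of $[0,+\infty]$; the general finite signed case follows by subtraction once finiteness is available. The constant $C_k$ is merely the normalization constant relating $\tf(\varphi\star\tilde\varphi)$ to $\tf\varphi\,\overline{\tf\varphi}$ for the chosen Fourier convention (with the convention of \eqref{ch2-2a} it equals $1$), and will not be tracked.

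For $\varphi\in\cs(\rek)$, one has $\varphi\star\tilde\varphi\in\cs(\rek)$ and, since $\Lambda=f(x)\,dx$,
\[
\int_{\rek}\!\int_{\rek} f(x-y)\,\varphi(x)\,\varphi(y)\,dx\,dy
=\langle\Lambda,\varphi\star\tilde\varphi\rangle
=\int_{\rek}\tf(\varphi\star\tilde\varphi)(\xi)\,\mu(d\xi)
=C_k\int_{\rek}|\tf\varphi(\xi)|^{2}\,\mu(d\xi),
\]
which is exactly \eqref{idcov}. For a finite nonnegative $\nu$, let $q_\delta$ be the centered Gaussian density on $\rek$ with covariance $\delta\,\mathrm{Id}$, so that $q_\delta\star q_\delta=q_{2\delta}$ and $\tf q_\delta(\xi)=e^{-\frac{\delta}{2}|\xi|^{2}}$, and set $\nu_R:=\bfone_{\{|x|\le R\}}\,\nu$, a compactly supported finite measure. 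Then $\nu_R\star q_\delta\in\cs(\rek)$, and applying the identity above to $\varphi=\nu_R\star q_\delta$, using $(\nu_R\star q_\delta)\star\widetilde{(\nu_R\star q_\delta)}=\nu_R\star\tilde\nu_R\star q_{2\delta}$ and Tonelli's theorem, gives
\[
\int_{\rek}\!\int_{\rek}(f\star q_{2\delta})(x-y)\,\nu_R(dx)\,\nu_R(dy)
=C_k\int_{\rek}e^{-\delta|\xi|^{2}}\,|\tf\nu_R(\xi)|^{2}\,\mu(d\xi).
\]
Letting $R\uparrow\infty$ (monotone convergence on the left, since $f\star q_{2\delta}\ge0$; dominated convergence on the right, with dominating function $\nu(\rek)^{2}e^{-\delta|\xi|^{2}}$, integrable against $\mu$), and then $\delta\downarrow0$ (monotone convergence on the right, as $e^{-\delta|\xi|^{2}}\uparrow1$), shows that $\int\!\int(f\star q_{2\delta})(x-y)\,\nu(dx)\,\nu(dy)$ increases, as $\delta\downarrow0$, to $C_k\int_{\rek}|\tf\nu(\xi)|^{2}\,\mu(d\xi)$.

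It remains to identify this limit with $\int\!\int f(x-y)\,\nu(dx)\,\nu(dy)$, and this is the one genuinely delicate step. Since $f\in L^{1}_{\mathrm{loc}}(\rek)$ and $(q_{2\delta})_\delta$ is a radially decreasing approximate identity, $f\star q_{2\delta}\to f$ at every Lebesgue point of $f$, and $\sup_\delta(f\star q_{2\delta})$ is dominated by a constant times the Hardy--Littlewood maximal function $Mf$ of $f$; by dominated (and monotone) convergence this handles the part of $\nu\otimes\nu$ carried off the diagonal. Here hypothesis $({\bf H_\Lambda})$ is what is used: the dichotomy (i)/(ii) guarantees that $f$ is well enough behaved on $\rek\setminus\{0\}$ --- a lower semicontinuous representative, continuous where it is finite, with $Mf$ locally comparable to $f$ --- for this convergence to go through. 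On the diagonal, $(f\star q_{2\delta})(0)=\int_{\rek}e^{-2\delta|\xi|^{2}}\mu(d\xi)\uparrow\mu(\rek)$, which equals $f(0)$ in $[0,+\infty]$: a finite $\mu$ would make $f$ bounded and continuous, contradicting $f(0)=+\infty$ under $({\bf H_\Lambda})$(i), so either $f(0)<\infty$ and $f$ is continuous everywhere, or $\mu(\rek)=+\infty=f(0)$. Consequently an atom of $\nu$ forces both sides of \eqref{l-1-1} to be $+\infty$ simultaneously, and the equality persists. Combining these convergences yields \eqref{l-1-1} for $\Phi=\Psi=\nu\ge0$, and polarization completes the proof. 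I expect the verification that $({\bf H_\Lambda})$ supplies exactly the pointwise and maximal-function control of $f$ required in the final limit to be the main obstacle; everything else is bookkeeping with the Fourier transform and standard convergence theorems.
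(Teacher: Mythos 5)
First, a point of comparison: the paper does not prove Lemma \ref{l-1} at all --- it is quoted from \cite[Theorem 5.2]{k-x-2009} (see also \cite[Corollary 3.4]{k-f-2013}), so your attempt must be measured against the proofs in those references. Your skeleton --- reduce by polarization to a finite nonnegative $\nu$, mollify by Gaussians, apply the elementary identity \eqref{idcov} to $\nu_R\star q_\delta$, let $R\uparrow\infty$ and then $\delta\downarrow 0$ --- is indeed the skeleton of those proofs, and everything up to the identity $\lim_{\delta\downarrow 0}\int\!\int (f\star q_{2\delta})(x-y)\,\nu(dx)\,\nu(dy)=C_k\int_{\rek} |\tf\nu(\xi)|^2\,\mu(d\xi)$ is correct (monotonicity on the spectral side guarantees the limit exists in $[0,+\infty]$).

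The gap is exactly where you predict it, and it is genuine rather than bookkeeping. To identify that limit with $\int\!\int f(x-y)\,\nu(dx)\,\nu(dy)$ you invoke (1) convergence of $f\star q_{2\delta}$ to $f$ at Lebesgue points and (2) domination of $\sup_\delta f\star q_{2\delta}$ by $C\,Mf$ plus dominated convergence off the diagonal. Neither closes. For (1), Lebesgue-a.e.\ convergence is insufficient: the image of $\nu\otimes\nu$ under $(x,y)\mapsto x-y$ need not be absolutely continuous (take $\nu$ singular), so it may charge exactly the Lebesgue-null set where convergence fails; one needs a distinguished everywhere-defined representative of $f$, and the regularity you attribute to $({\bf H_\Lambda})$ (a lower semicontinuous representative, continuity where finite, $Mf$ locally comparable to $f$) is asserted, not derived --- $({\bf H_\Lambda})$ literally only says $f$ is finite off the origin. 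For (2), dominated convergence would require $\int\!\int_{\{x\ne y\}} Mf(x-y)\,\nu(dx)\,\nu(dy)<\infty$, which follows from nothing in the hypotheses and is not implied by finiteness of either side of \eqref{l-1-1}; note also that the equality must hold even when both sides are $+\infty$, so a domination argument is structurally the wrong tool for one of the two inequalities. (A Fatou argument gives $\int\!\int f\,d\nu\,d\nu\le C_k\int|\tf\nu|^2 d\mu$ once one has $\liminf_\delta f\star q_{2\delta}\ge f$ pointwise off $0$; the reverse inequality is the hard direction and in \cite{k-x-2009,k-f-2013} it is obtained by exploiting positive-definiteness of $f$ itself, the two alternatives (i)/(ii) of $({\bf H_\Lambda})$ corresponding to the two cases treated there.) Your handling of atoms has the same character: that $\int|\tf\nu|^2 d\mu=+\infty$ whenever $\nu$ has an atom and $\mu(\rek)=+\infty$ is plausible but not proved. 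Finally, the polarization step for signed measures requires the diagonal quantities to be finite, which should be stated as a hypothesis or verified in the application; in the paper the lemma is only ever applied to absolutely continuous measures with densities dominated by $S(t-s,x-\cdot)$, where this is harmless.
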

\medskip

Applying Proposition \ref{ch2-incluU} and Lemma \ref{l-1} to $\Phi(\ast)=\Psi(\ast)$, where, for fixed $(t, x; s)$, $\Phi(\ast)$ is the absolutely continuous (with respect to Lebesgue measure) signed measure with density $G_{t,x}(s,\ast)$, we have
\begin{align}
\label{comp-norm}
E\left(\Vert G_{t,x}\Vert_{U_T}^2\right)
&=C E\left(\int_0^t ds \int_D dy \int_D dz\, G_{t,x}(s,y) f(y-z)\, G_{t,x}(s,z)\right)\notag\\
&\le C E\left(\int_0^t ds \int_D dy \int_D dz\, |\Gamma(t,x;s,y)|\, |Z(s,y)|\, f(y-z)\right.\notag\\
 &\left.\qquad \qquad \qquad\qquad\qquad\quad\times\, |\Gamma(t,x;s,z)|\, |Z(s,z)|\right).
\end{align}
Using assumption $({\bf H_{\Gamma}})$(b), this expression is bounded above by
\begin{align*}
&\tilde C E\Big(\int_0^t ds \int_D dy \int_D dz\,  S(t-s,x-y)\, |Z(s,y)|\, f(y-z)\\
&\qquad \qquad \qquad\qquad\qquad\times S(t-s,x-z)\, |Z(s,z)|\Big)\\
&\qquad \le \tilde C \sup_{(s,y)\in[0,T]\times D}E(Z^2(s,y)) \int_0^t ds \int_{\rek} \mu(d\xi)\,|\tf S(s)(\xi)|^2.
\end{align*}
Therefore,
\beq
\label{2-3}
E\left(\Vert G_{t,x}\Vert_{U_T}^2\right)\le \tilde C  \sup_{(s,y)\in[0,T]\times D}E(Z^2(s,y)) \int_0^t ds \int_{\rek}\mu(d\xi)\, |\tf S(s)(\xi)|^2 <\infty,
\eeq
by \eqref{parabolic200}. This shoes that the stochastic integral \eqref{si-ex-1} is well-defined.

%%%%%%More general integrals
\begin{remark}
\label{s2-rem-1}
One can show that $E\left(\Vert G_{t,x}\Vert_{U_T}^2\right) < \infty$ under weaker assumptions than $({\bf H_{\Gamma}})$ and $({\bf H_{\Lambda}})$, and then the stochastic integral \eqref{si-ex-1} remains well defined. We mention two possibilities:
\medskip

\noindent   $({\bf H_{\Lambda,\Gamma}}(1))$.\ $({\bf H_{\Lambda}})$ holds and,  for any $t\in [0,T]$, $\Gamma(t)$ is a measure with finite total variation $\Vert \Gamma(t)\Vert$, that satisfies
\beq
\label{hgammatype}
\int_0^T dt \int_{\rek}\mu(d\xi)\, \big\vert \tf \Vert \Gamma(t)\Vert (\xi)\big\vert^2 < \infty.
\eeq
\smallskip

\noindent   $({\bf H_{\Lambda,\Gamma}}(2))$.\ The covariance measure $\Lambda$ is a nonnegative  non-negative definite symmetric distribution. The fundamental solution $\Gamma$ is such that for any $t\in [0,T]$, $\Gamma(t)$ is a measure and its total variation $\Vert \Gamma(t)\Vert$ belongs to the set $\cO^\prime_{\cC}(\rek)$ of distributions with rapid decrease, and \eqref{hgammatype} holds.
\smallskip

Indeed, the sufficiency of $({\bf H_{\Lambda,\Gamma}}(1))$ follows from the arguments that come after Lemma \ref{l-1}, and that of $({\bf H_{\Lambda,\Gamma}}(2))$ can be checked as in \cite[Proposition 3.3]{nualartquer}.
\end{remark}

\medskip

\noindent{\em The notion of random field solution}
\vskip 12pt

\begin{def1}
\label{ch5-d1}
Assume $({\bf H_{\Gamma}})$ and $({\bf H_{\Lambda}})$. A jointly measurable and adapted real-valued random field $(u(t,x),\, (t,x)\in [0,T]\times D)$ is a {\em random field solution} of the SPDE \eqref{5.1} if, for all $(t,x)\in [0,T]\times D$,  the following identity is satisfied:
\begin{align}
\label{5.4}
u(t,x) & = I_0(t,x) + \int_0^t \int_{D} \Gamma(t,x;s,y) \sigma(s,y,u(s,y))\, W(ds,dy)\notag\\
&\qquad + \int_0^t ds  \int_{D}  dy\, \Gamma(t,x;s,y) b(s,y,u(s,y)), \ a.s.,
\end{align}
where $I_0(t,x)$ is the solution of the homogeneous PDE with the same initial (and boundary) conditions.
\end{def1}

\begin{remark}
\label{ch5-r1}
\begin{enumerate}
 \item Recall that the notation for the stochastic integral on the right-hand side of \eqref{5.4} means that we are integrating the stochastic process $(G_{t,x}(s,\ast):=\Gamma(t,x;s,\ast) \sigma(s,y,u(s,\ast))1_D(\ast), \, s \in[0,T])$ with respect to the standard cylindrical Wiener process $W$, in the sense of Definition \ref{si-d1}. Hence,  this process should be
 measurable, adapted and in $L^2(\Omega\times [0,T]; U)$.
Assuming  $({\bf H_{\Gamma}})$ and
$({\bf H_{\Lambda}})$, we have proved above that the latter condition is assured provided
\beqn
\sup_{(t,x)\in[0,T]\times D} E (|\sigma(t,x,u(t,x))|^2) < \infty.
\eeqn
%%%%%%
\item Assuming $({\bf H_{\Gamma}})$ and that
\beqn
\sup_{(t,x)\in[0,T]\times D} E (|b(t,x,u(t,x))|) < \infty,
\eeqn
the pathwise integral on the right-hand side of \eqref{5.4} is well-defined. Indeed, using \eqref{parabolic200-bis}, we have
\begin{align}
\label{convol-2}
&E\left(\int_0^t ds \int_D dy\, |\Gamma(t,x;s,y)|\, |b(s,y,u(s,y))|\right)\notag\\
&\qquad\qquad \le CE\left(\int_0^t \int_{D} dy\, S(t-s,x-y) \, |b(s,y,u(s,y)|\right)\notag\\
&\qquad\qquad \le C \sup_{(t,x)\in[0,T]\times D} E (|b(t,x,u(t,x))|)\int_0^tds \int_{\rek}dy\,  S(s,y) < \infty.
\end{align}
\end{enumerate}
\end{remark}
%\beqn
%\int_0^t ds (\Gamma(s)\star b(t-s,\ast,u(s,\ast)))(x),
%\eeqn
%where, ``$\star$'' denotes the convolution operator in the space variable ``$\ast$''.
%If the initial value is as in \eqref{5.ic2}, then
%I\beq
%I\label{5.5}
%II_0(t,x) = (\Gamma(t)\ast v_0)(x) + \frac{\partial}{\partial t}(\Gamma(t)\ast u_0)(x).
%I\eeq
%\end{enumerate}

\section{Existence and uniqueness of solutions}
\label{s3}

In this section, we prove a theorem on existence and uniqueness of a random field solution to the SPDE \eqref{5.1}.

\begin{thm}
\label{ch5-t2}
We assume $({\bf H_{\Lambda}})$,
$({\bf H_{\Gamma}})$, $({\bf H_L})$ and $({\bf H_I})$. Then the SPDE \eqref{5.1} with $\cL$ given by \eqref{5.300} has a random field solution
$(u(t,x),\, (t,x)\in[0,T]\times D)$ in the sense of Definition \ref{ch5-d1}, that is,
\begin{align}
\label{5.301}
u(t,x) & = I_0(t,x) + \int_0^t \int_{D} \Gamma(t,x;s,y) \sigma(s,y,u(s,y)) W(ds,dy)\notag\\
&\qquad + \int_0^t ds  \int_{D} dy\, \Gamma(t,x;s,y) b(s, y,u(s,y)), \ a.s.,
\end{align}
 for all $(t,x)\in [0,T]\times D$.% where $I_0(t,x)$ is the solution of the homogeneous PDE with the same initial conditions, and if necessary, also the boundary conditions.

Moreover, for any $p\ge 1$,
\beq
\label{5.302}
\sup_{(t,x)\in [0,T]\times D}E\left(|u(t,x)|^p\right) < \infty,
\eeq
and this random field solution is unique (in the sense of versions) among random field solutions that satisfy \eqref{5.302} with $p=2$.
\end{thm}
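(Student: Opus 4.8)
The plan is to run a standard Picard iteration in the Banach space of adapted random fields with the norm $\|v\|_p := \sup_{(t,x)\in[0,T]\times D}\left(E|v(t,x)|^p\right)^{1/p}$, for a fixed $p\ge 2$. First I would define $u_0(t,x) := I_0(t,x)$, which is bounded by $({\bf H_I})$, and then set, inductively,
\begin{align*}
u_{n+1}(t,x) &= I_0(t,x) + \int_0^t\!\!\int_D \Gamma(t,x;s,y)\,\sigma(s,y,u_n(s,y))\,W(ds,dy) \\
&\quad + \int_0^t\!ds\int_D\!dy\,\Gamma(t,x;s,y)\,b(s,y,u_n(s,y)).
\end{align*}
One first checks that each $u_n$ is well-defined: the uniform linear growth in $({\bf H_L})$ together with the inductive bound $\sup_{(s,y)}E|u_n(s,y)|^2<\infty$ shows that $\sigma(s,y,u_n(s,y))$ satisfies the hypothesis of the ``examples of integrands'' computation (the bound \eqref{2-3}), so the stochastic integral exists; and \eqref{convol-2} together with the linear growth of $b$ handles the pathwise integral. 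Joint measurability and adaptedness of $u_{n+1}$ follow from those of $\Gamma$, $\sigma$, $b$ and $u_n$, and from standard properties of the two integrals.

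The core estimate is the moment recursion. Writing $M_n(t) := \sup_{x\in D}\sup_{r\le t}E|u_n(r,x)|^p$ (or the version without the inner supremum over $r$, adjusting as needed), I would use Burkholder's inequality for the martingale given by the stochastic integral, together with the isometry-type control \eqref{si-2} and the bound \eqref{2-3} applied with $Z(s,y)=\sigma(s,y,u_n(s,y))$, to get
\[
E\left|\int_0^t\!\!\int_D \Gamma\,\sigma(\cdot,u_n)\,W(ds,dy)\right|^p \le C\left(\int_0^t \Phi(t-s)\,\big(1+M_n(s)\big)\,ds\right)^{p/2},
\]
where $\Phi(s) := \int_{\rek}\mu(d\xi)\,|\tf S(s)(\xi)|^2$ is integrable on $[0,T]$ by \eqref{parabolic200}; here one must be slightly careful to pull the expectation inside and use Hölder/Minkowski so that $(1+M_n(s))$ appears rather than a pointwise second moment. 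The pathwise integral is handled similarly via Hölder in $(s,y)$ against the finite measure $S(t-s,x-y)\,ds\,dy$ (finite by \eqref{parabolic200-bis}, cf.\ \eqref{boundgauss}), giving a contribution bounded by $C\int_0^t(1+M_n(s))\,ds$. Combining, $M_{n+1}(t)\le C_1 + C_2\int_0^t(1+M_n(s))\big(1+\Phi(t-s)^{p/2}\big)\,ds$ — but since $\Phi$ need not be bounded, the kernel is only $L^1$, not $L^\infty$, and a plain Gronwall argument does not immediately close. The standard fix, which I would carry out, is to iterate the recursion $p/2$-many times (or use the fact that a finite convolution power of an $L^1$ kernel with itself becomes bounded, or pass through an $L^q$-version of Gronwall) to obtain $\sup_n M_n(T)<\infty$; this also yields \eqref{5.302} once convergence is established.

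For convergence and uniqueness I would estimate $D_n(t) := \sup_{x\in D}E|u_{n+1}(t,x)-u_n(t,x)|^p$ by the same two estimates, now using the \emph{Lipschitz} part of $({\bf H_L})$ in place of linear growth, obtaining $D_n(t)\le C\int_0^t D_{n-1}(s)\,\big(1+\Phi(t-s)^{p/2}\big)\,ds$. Iterating this inequality, the convolution powers of the $L^1$ kernel tend to zero fast enough (again after a finite number of steps the iterated kernel is bounded, and then the usual factorial decay kicks in) that $\sum_n D_n(T)^{1/p}<\infty$, so $(u_n)$ is Cauchy, uniformly in $(t,x)$, in $L^p(\Omega)$; call the limit $u$. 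Passing to the limit in the defining equation — using \eqref{si-2} for the stochastic integral and \eqref{convol-2} for the pathwise one, both of which are continuous in the integrand for the relevant norms — shows $u$ satisfies \eqref{5.301}, and $\sup_{(t,x)}E|u(t,x)|^p<\infty$ follows from $\sup_n M_n(T)<\infty$ and Fatou. A measurable version exists by the usual argument (the map $(t,x)\mapsto u(t,x)\in L^p(\Omega)$ is continuous, or at least one extracts a jointly measurable modification). Uniqueness among solutions satisfying \eqref{5.302} with $p=2$ is the same Lipschitz-Gronwall computation applied to the difference of two solutions, yielding that its $p=2$ norm vanishes.

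The main obstacle is the one flagged above: because $\Phi(s)=\int\mu(d\xi)|\tf S(s)(\xi)|^2$ is only assumed integrable (not bounded) on $[0,T]$, the naive Gronwall/Picard contraction does not close in one step, and one has to argue that a finite iterate of the recursion has a bounded kernel — equivalently, control the $\lceil p/2\rceil$-fold (or so) time-convolution of an $L^1$ function — before the standard factorial decay gives summability. Everything else is routine.
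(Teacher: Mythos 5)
Your proposal follows essentially the same route as the paper: Picard iteration, Burkholder plus H\"older for the stochastic term, H\"older against the finite measure $S(t-s,x-y)\,ds\,dy$ for the drift term, and a Gronwall argument with an $L^1$ (not $L^\infty$) kernel in time, which the paper handles by invoking an extended Gronwall lemma (\cite[Lemma C.1.3]{d-ss-2024}) that delivers both $\sup_n M_n(T)<\infty$ and the summability $\sum_n D_n(T)^{1/p}<\infty$ --- exactly the device you describe reconstructing by iterating the recursion until the convolved kernel is controlled. One slip worth correcting: after Burkholder you have $E|\cdots|^p \le C\bigl(\int_0^t \Phi(t-s)(1+M_n(s))\,ds\bigr)^{p/2}$, and the correct application of H\"older's inequality (as in the paper's \eqref{3-10}) bounds this by $\bigl(\int_0^T\Phi\bigr)^{p/2-1}\int_0^t \Phi(t-s)(1+M_n(s))\,ds$, so the kernel in the closed recursion is $\Phi$ itself (which is in $L^1$ by \eqref{parabolic200}), not $\Phi^{p/2}$ as in your combined inequality; $\Phi^{p/2}$ need not be integrable, so the recursion as you wrote it could be vacuous, whereas with the kernel $\Phi$ the $L^1$-Gronwall argument closes exactly as you intend. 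With that correction the argument matches the paper's.
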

%%%%%%%Proof of the theorem
\begin{proof}
We follow the scheme of the proof of \cite[Theorem 4.2.1]{d-ss-2024} for nonlinear SPDEs driven by space-time white noise, with the changes due to the more general covariance $\Lambda$.

Consider the Picard iteration scheme
$
 u^0(t,x)  = I_0(t,x),
$
 and for $n \geq 0$,
 \begin{align}\label{5.7}
    u^{n+1}(t,x) & =  u^0(t,x) + \int_0^t \int_{D} \Gamma(t,x;s,y) \sigma(s,y,u^n(s,y)) W(ds,dy)\notag\\
&\qquad + \int_0^t ds  \int_{D} dy\, \Gamma(t,x;s,y) b(s,y,u^n(s,y)), \quad a.s.
\end{align}
\smallskip

\noindent{\em Step 1}. We prove by induction on $n$ that for each $n\ge 0$, the process
\beqn
(u^n(t,x),\, (t,x)\in[0,T]\times D),\quad n\ge 1,
\eeqn
 is well-defined, jointly measurable (that is, has a jointly measurable version) and adapted, and for any $p\ge 2$,
\beq
\label{5.6n}
\sup_{(t,x)\in [0,T]\times D}E\left(|u^n(t,x)|^p\right) < \infty.
\eeq
%%%%%measurability and adaptedness of integrands
These properties for $u^n$ imply that the stochastic and pathwise integral terms in \eqref{5.7} are well-defined, according to Definition \ref{si-d1} and Remark \ref{ch5-r1} point 2., respectively.

Indeed, denote by $g$ either the coefficient $\sigma$ or $b$. Assuming these properties for $u^n$, we infer that the map $(s,y,\omega)\mapsto g(s,y,u^n(s,y,\omega))$ from $([0,T]\times D\times \Omega), \cB_{[0,T]}\times \cB_{D}\times \tf)$ into $(\re,\cB_{\re})$ is measurable, since it is the composition of the measurable maps $(s,y,\omega)\mapsto(s,y,u^n(s,y,\omega))$ with $(s,y,z)\mapsto g(s,y,z)$. With similar arguments, we argue that the map $(s,y,\omega)\mapsto g(s,y,u^n(s,y,\omega))$ is adapted, that is, for fixed $s$, $(y, \omega) \mapsto g(s, y, u^n(s,y,\omega)$ is  measurable from
$(D\times \Omega, \cB_{D}\times \tf_s)$ into $(\re,\cB_{\re})$.

%detailed
%since it is the composition of the map $(s,y,\omega)\mapsto(s,t,u^n(s,y,\omega))$ form $([0,T]\times D\times \Omega,\cB_{[0,T]}\times \cB_{D}\times \tf)$ to $([0,T]\times D\times\re, \cB_{[0,T]}\times \cB_{D}\times\cB_{\re})$ with the map $(s,y,z)\mapsto g(s,y,z)$ from $([0,T]\times D\times \re, \cB_{[0,T]}\times \cB_{D}\times\cB_{\re})$ to $(\re,\cB_{\re})$.

%Detailed
%With a similar proof, we check that, for each $s\in[0,T]$,  the map $(s,y,\omega)\mapsto g(s,y,u^n(s,y))$ is adapted, that is, measurable from
%$(\re\times \Omega, \cB_{\re}\times \tf_s)$ into $(\re,\cB_{\re})$.

%%%%%%Existence of the stochastic integral
We now check that the stochastic integral term in \eqref{5.7} is well-defined. Observe that this integral is \eqref{si-ex-1} with $Z(s,y):= \sigma(s,y,u^n(s,y))$. By the uniform linear growth of $\sigma$ (hypothesis $({\bf H_L})$),
\beqn
\sup_{(s,y)\in[0,T]\times D} E\left(\sigma^2(s,y,u^n(s,y))\right) \le 2\bar c^2 \left(1+\sup_{(s,y)\in[0,T]\times D} E[|u^n(s,y)|^2]\right)<\infty
\eeqn
by \eqref{5.6n}. Hence, applying \eqref{2-3} with
\beqn
G_{t,x}(s,y):=\Gamma(t,x;s,y) \sigma(s,y,u^n(s,y))\, 1_D(y),
\eeqn
we finish the verification.
%%%%%%%Existence of the pathwise integral

The pathwise integral term in \eqref{5.7} is also well-defined. Indeed, by \eqref{5.6n}, %use the uniform linear growth of $b$ (hypothesis $({\bf H_L})$) to see that
\beqn
\sup_{(s,y)\in[0,T]\times D} E\left(|b(s,y,u^n(s,y)|\right) \le \bar c \left(1+\sup_{(s,y)\in[0,T]\times D} E[|u^n(s,y)|]\right)<\infty.
\eeqn
Hence, from \eqref{gauss-simple} and \eqref{parabolic200-bis},  %with $u(s,y)$ there replaced by $u^n(s,y)$, and using \eqref{parabolic200-bis},
we obtain the result.

%%%%%%%Induction for step 1
We now start the induction for the proof of Step 1. Let $n=0$ and fix $p\ge 2$. Assumption $({\bf H_I})$ implies that $u^0=I_0$ satisfies the conditions of Step 1.

Assume that for some $n\ge 0$, the process $(u^n(t,x),\ (t,x)\in[0,T]\times D)$ is well-defined, jointly measurable and adapted, and satisfies \eqref{5.6n} for all $p\ge 2$. According to the discussion above, the process $u^{n+1}=(u^{n+1}(t,x),\, (t,x)\in[0,T])\times D)$ given in \eqref{5.7} is well-defined. We want to prove that $u^{n+1}$ is jointly measurable and adapted, and that it satisfies \eqref{5.6n} (with $n$ there replaced by $n+1$). Define
\begin{align*}
\cI^n(t,x) &:= \int_0^t \int_D \Gamma(t,x;s,y) \sigma(s,y,u^n(s,y))\, W(ds,dy),\\
\cJ^n(t,x) &:=  \int_0^t ds \int_D dy\, \Gamma(t,x;s,y) b(s,y,u^n(s,y)).
\end{align*}
Assuming $({\bf H_{\Lambda}})$ and $({\bf H_{\Gamma}})$, we can extend the proofs of Propositions 2.6.2 and 2.6.3 in \cite{d-ss-2024} to the setting of our theorem. In this way, we obtain the existence of jointly measurable and adapted modifications of $\cI^n(t,x)$ and $\cJ^n(t,x)$, respectively. With these modifications, we define a jointly measurable and adapted version of $u^{n+1}$ (which we denote also by $u^{n+1}$).

Let
\beq
\label{kappas}
\kappa_1(s) = \int_D dy \int_D dz\, S(s,x-y) f(y-z)S(s,x-z),\quad \kappa_2(s) = \int_D dy\, S(s,y).
\eeq
Since $S$ and $f$ are nonnegative, using \eqref{idcov}, we see that
\beq
\label{kappas-bis}
\kappa_1(s) \le  \int_{\rek}  \mu(d\xi)\,|\tf S(s)(\xi)|^2,\quad \kappa_2(s) \le \int_{\rek} dy\, S(s,y).
\eeq

Define
$G_{t,x}^n(s,y) = \Gamma(t,x;s,y)\sigma(s,y,u^n(s,y)) 1_D(y)$
and fix $p\ge 2$. By Burkholder's inequality, then H\"older's inequality, and because of Assumptions $({\bf H_\Gamma})$ and $({\bf H_L})$, we see that
\begin{align}
\label{3-10}
&E(|\cI^n(t,x)|^p)\notag\\
&\quad \le C_p E\left(\left\vert\int_0^t ds \int_{D} dy \int_{D} dz\, G_{t,x}(s,y) f(y-z)G_{t,x}(s,z)\right\vert^{\frac{p}{2}}\right)\notag\\
&\quad \le C_p E\left(\Big\vert\int_0^t ds \int_{D} dy \int_{D} dz\, S(t-s,x-y ) \vert\sigma(s,y,u^n(s,y))|\,f(y-z)\right.\notag\\
&\left.\qquad \qquad \times S(t-s,x-z ) \, |\sigma(s,y,u^n(s,z))|\,\Big\vert^{\frac{p}{2}}\right)\notag\\
&\quad \le \tilde C_p\left(\int_0^t ds\, \kappa_1(s)\right)^{\frac{p}{2}-1} \int_0^t ds\,  \kappa_1(t-s)\,\sup_{x\in D} \, (1+E(|u^n(s,x)|^p).
\end{align}
From this, we obtain
\begin{align}
\label{3-11}
E(|\cI^n(t,x)|^p)%\le \bar C_p  \sup_{(t,x)\in [0,T]\times D}\notag\\\left(1+E(|u^n(t,x)|^p\right)\left(\int_0^t ds\, \kappa_1(s)\right)^{\frac{p}{2}}\notag\\
&\le \bar C_p  \sup_{(t,x)\in [0,T]\times D}\left(1+E(|u^n(t,x)|^p)\right)\notag\\
&\qquad\qquad \qquad\qquad\times \left(\int_0^t ds  \int_{\rek} \mu(d\xi)\, |\tf S(s)(\xi)|^2\right)^{\frac{p}{2}} < \infty
\end{align}
by \eqref{5.6n}, \eqref{kappas-bis} and \eqref{parabolic200}.

Using H\"older's inequality and Assumptions $({\bf H_\Gamma})$ and $({\bf H_L})$, we obtain
\begin{align}
\label{3-12}
 E\left(\vert \cJ^n (t,x)\vert^p\right) %\le \left(\int_0^t ds \int_D dy\, \vert \Gamma(t,x;s,y)\vert\right)^{p-1}\notag\\
%&\quad\times \int_0^t ds \int_D dy\, \vert \Gamma(t,x;s,y)\vert E\left(\vert b(s,y,u^n(s,y))\vert^p\right),\notag\\
 &\leq \bar c_p \left(\int_0^t ds\, \kappa_2(s)\right)^{p-1}\notag\\
 &\qquad\qquad\times \int_0^t ds\, \kappa_2(t-s) \, \sup_{x\in  D}\left(1+E(\vert u^n(s,x)\vert^p)\right) .
\end{align}
This implies
\beq
\label{3-13}
E(|\cJ^n(t,x)|^p)\le \bar c_p  \sup_{(t,x)\in [0,T]\times D}\left(1+E(|u^n(t,x)|^p\right)\left(\int_0^t ds\, \kappa_2(s)\right)^{\frac{p}{2}} < \infty
\eeq
by \eqref{5.6n}, \eqref{kappas-bis} and \eqref{parabolic200-bis}.

Adding \eqref{3-11} and \eqref{3-13}, we obtain
\beq
\label{step1-end}
 \sup_{(t,x)\in [0,T]\times D} E\left(|u^{n+1}(t,x)|^p\right)\le \tilde C_p\left( 1+ \sup_{(t,x)\in [0,T]\times D}\left(1+E(|u^n(t,x)|^p\right)\right) < \infty.
 \eeq
 This ends the proof of Step 1.
 %%%%%%%%%
 \vskip 12pt

 %%%%%%%Step 2
 \noindent{\em Step 2.} We show that the sequence of processes
$\left(u^n(t,x),\, (t,x)\in [0,T] \times D\right)$, $n\ge 0$,
converges in $L^p(\Omega)$ uniformly in $(t,x)\in [0,T] \times D$ to a process
\beqn
\left(u(t,x),\, (t,x)\in [0,T] \times D\right)
\eeqn
 that satisfies \eqref{5.301} and has a jointly measurable and adapted version.

Indeed, let
$M_n(t)= \sup_{y\in D} E\left(\left\vert u^{n+1}(t,y)-u^n(t,y)\right\vert^p\right)$, $n\ge 0$.
Using arguments similar to those used to deduce \eqref{3-10} and \eqref{3-12}, but applying the uniform Lipschitz continuity property of $\sigma$ and $b$ of Assumption $({\bf H_L})$ instead of the property of linear growth, we deduce that
\beqn
M_n(t) \le C_p\int_0^t ds  \left(\kappa_1(t-s)+\kappa_2(t-s)\right) M_{n-1}(s),\quad n\ge 0.
\eeqn
Moreover, from \eqref{5.6n},
\beq
\label{ch1'-s5.7-bis}
\sup_{s\in[0,T]} M_0(s) \le C_p\sup_{s\in[0,T]}\left( E(|u^1(s,y)|^p) + E(|u^0(s,y)|^p)\right) < \infty.
\eeq

Consider the sequence of functions defined for $t\in[0,T]$ by  $f_n(t):=M_n(t)$ and let $\kappa(t) = \kappa_1(t)+\kappa_2(t)$, $t\in[0,T]$. From \eqref{parabolic200} and \eqref{parabolic200-bis}, we see that $\int_0^T\kappa(s)\, ds <\infty$.
%Hence, \eqref{rdA3.04} yields
%\beqn
%M_n(t) \le  \left[\sup_{s\in[0,T]} M_0(s)\right]\int_0^t J^{\ast n}(s)\, ds.
%\eeqn
Then, applying Gronwall-type
Lemma \cite[Lemma C.1.3]{d-ss-2024} (with $z_0=0$ and $z\equiv 0$ there), we deduce that
\beq\label{rde4.1.10}
\sum_{n=0}^\infty\, \sup_{(t,x)\in [0,T] \times D}\left\Vert u^{n+1}(t,x)-u^n(t,x)\right\Vert_{L^p(\Omega)}
 < \infty.
\eeq
This implies that the sequence $\left(u^n(t,x),\, (t,x)\in [0,T] \times D\right)$, $n\ge 0$, converges in $L^p(\Omega)$, uniformly in $(t,x)\in [0,T] \times D$. That is, there exists $\left(u(t,x),\, (t,x)\in [0,T] \times D\right)$ such that
\beq\label{rde4.1.11}
   \lim_{n\to\infty}\, \sup_{(t,x)\in [0,T] \times D} \left\Vert u^{n}(t,x)-u(t,x)\right\Vert_{L^p(\Omega)} = 0.
\eeq
In particular, \eqref{5.302} holds.
%In fact,
%$$
%   u(t,x) = I_0(t,x) + \sum_{n=0}^\infty \left(u^{n+1}(t,x)-u^n(t,x) \right),
%$$
%where the series converges in $L^p(\Omega)$, uniformly in $(t,x)\in [0,T] \times D$. By the boundedness assumption $(\bf{H_I})$ on $I_0$ and \eqref{rde4.1.10}, $\left(u(t,x),\, (t,x)\in [0,T] \times D\right)$ satisfies \eqref{ch1'-s5.2}.

For each $(t,x)$, $u^n(t,x)$ converges to $u(t,x)$ in probability (in fact, in $L^p(\Omega)$), so $(u(t,x))$ has a jointly measurable version by \cite[Lemma A.4.5]{d-ss-2024} which we again denote $(u(t,x))$.

For each $(t,x)$, $u(t,x)$ is $\cF_t$-measurable.
 Apply \cite[Lemma A.4.2 (a)]{d-ss-2024} with $(X,\cX) = (D, \B_D)$, and let $\cO$ denote the {\em optional} $\sigma$-field.  There is a $\cB_D \times \cO$-measurable function $(x,t,\omega) \mapsto \bar u(t,x,\omega)$ such that, for all $(t,x) \in [0,T]\times D, u(t,x) = \bar u(t,x)$ a.s. This modification $\bar u$ of $u$ is jointly measurable and adapted  since for all $t \in [0,T]$, $\cO\vert_{[0,t] \times \Omega} \subset \cB_{[0,t]} \times \F_t $. In the sequel, we use this modification and denote it $u$ instead of $\bar u$.

 %By Propositions \ref{rdprop2.6.2} and \ref{rdprop2.6.4}, $u$ has a jointly measurable and adapted (in the sense of Definition \ref{ch1'-s4.d1}) modification, which we also denote by $u$.

We have already seen that \eqref{5.302} is satisfied. Hence, the stochastic and pathwise integrals in \eqref{5.301} are well-defined.
\vskip 12pt
%%%%%%%%%

%%%%%%%%%Step 3

\noindent{\em Step 3.} We show that the stochastic process $\left(u(t,x),\, (t,x)\in [0,T] \times D\right)$ obtained in Step 2 satisfies equation \eqref{5.301}. Indeed, let
\begin{align}
\mathcal {I}(t,x) &= \int_0^t \int_D \Gamma(t,x;s,y) \sigma(s,y,u(s,y))\, W(ds,dy), \label{i-si}\\
\mathcal{J}(t,x) &=  \int_0^t ds \int_D dy\, \Gamma(t,x;s,y) b(s,y,u(s,y)). \label{j-pi}
\end{align}
Proceeding as in the proof of  \eqref{3-11} and \eqref{3-13}, but using the uniform Lipschitz continuity property of  $(\bf{H_L})$ instead of the linear growth, we obtain
\begin{align}
\label{ch1'-s5.10}
E\left(\left\vert\mathcal {I}^n(t,x) -\mathcal {I}(t,x)\right\vert^p\right) %\le C_p \left(\int_0^t ds\, \kappa_1(t-s)\right)^{\frac{p}{2}-1}\notag\\
%&\quad\times\int_0^t ds \ \kappa_1(t-s) \sup_{y\in D} E\left(\left\vert u^n(s,y)-u(s,y)\right\vert^p\right)\notag\\
&\le C_p \left(\int_0^t ds\, \kappa_1(t-s)\right)^{\frac{p}{2}}\notag\\
&\quad\times \sup_{(s,x)\in [0,t] \times D}E\left(\left\vert u^n(s,x)-u(s,x)\right\vert^p\right),
\end{align}
and
\begin{align}
\label{ch1'-s5.11}
E\left(\left\vert\mathcal {J}^n(t,x) -\mathcal {J}(t,x)\right\vert^p\right) %\le \bar C_p\left(\int_0^t ds\, \kappa_2(t-s)\right)^{p-1}\notag\\
%&\quad \times\int_0^t ds \, \kappa_2(t-s) \sup_{y\in D} E\left(\left\vert u^n(s,y)-u(s,y)\right\vert^p\right)\notag\\
&\le \bar C_p \left(\int_0^t ds\, \kappa_2(t-s)\right)^p\notag\\
&\quad\times \sup_{(s,x)\in [0,t] \times D}E\left(\left\vert u^n(s,x)-u(s,x)\right\vert^p\right).
\end{align}
We have seen in \eqref{rde4.1.11} that the last right-hand sides of \eqref{ch1'-s5.10} and \eqref{ch1'-s5.11} converge to $0$ as $n\to\infty$.

With the notation introduced in Step 1, we have that
\beqn
u^{n+1}(t,x) = I_0(t,x) + \mathcal {I}^n(t,x) + \mathcal {J}^n(t,x).
\eeqn

The left-hand side converges to $u(t,x)$ in $L^p(\Omega)$, uniformly in $(t,x)\in [0,T] \times D$, while from \eqref{ch1'-s5.10} and \eqref{ch1'-s5.11}, the right-hand side converges to $I_0(t,x) + \mathcal {I}(t,x) + \mathcal {J}(t,x)$. Therefore, for each $(t,x)\in [0,T] \times D$,
\beqn
   u(t,x) = I_0(t,x) + \cI(t,x) + \cJ(t,x) \qquad \text{a.s.},
\eeqn
that is, equation \eqref{5.301} holds.
\vskip 12pt
%%%%%%

%%%%%%%Uniqueness
\noindent{\em Step 4: Uniqueness}. Let
\beqn
\left(u(t,x),\, (t,x)\in [0,T] \times D\right),\quad \left(\tilde u(t,x), \, (t,x)\in [0,T] \times D\right),
\eeqn
 be two random field solutions to \eqref{5.301} satisfying \eqref{5.302}
with $p=2$. Using the same arguments as in \eqref{ch1'-s5.10}, \eqref{ch1'-s5.11}, we obtain
\begin{align*}
\sup_{x\in D} E\left(\left(u(t,x)-\tilde u(t,x)\right)^2\right)& \le C_2 \int_0^t ds\ (\kappa_1(t-s)+\kappa_2(t-s)) \\
&\quad\qquad\times\sup_{y\in D} E\left(\left\vert u(s,y)-\tilde u(s,y)\right\vert^2\right).
\end{align*}
Applying \cite[Lemma C.1.3, Equation (C.1.15)]{d-ss-2024} to the constant sequence
$
f(t) = f_n(t) := \sup_{x\in D} E\left(\left( u(t,x)-\tilde u(t,x)\right)^2\right),
$
 with $J(t):= \kappa(t)$, $z_0 = 0$ and $z\equiv 0$ there, we see that
 \beqn
 \sup_{x\in D} E\left(\left( u(t,x)-\tilde u(t,x)\right)^2\right) = 0, \quad {\text {for all}}\ t\in[0,T],
 \eeqn
 therefore $\tilde u$ is a version of $u$.
\end{proof}
\begin{remark}
\label{r-3-1}
Let $\Lambda=\delta_{0}(dx)$. In this case, the spectral measure is $\mu(dx)=dx$ and the Gaussian noise introduced at the beginning of Section \ref{s2} is a space-time white noise. The conditions $(a)$ and $(b)$ of Assumption $({\bf H_\Gamma})$ imply conditions $(ii)$, $(iiia)$, $(iiib)$ of \cite[Assumption $({\bf H_\Gamma})$, Chapter 4]{d-ss-2024}. Thus, \cite[Theorem 4.2.1]{d-ss-2024} implies Theorem \ref{ch5-t2} in this special case.
\end{remark}

%Marta 21.10

\section{Sample path properties of parabolic SPDEs with non constant coefficients}
\label{ch6-s1}
\vskip 12pt

In this section, we consider the SPDE \eqref{5.1} driven by the second order partial differential operator in \eqref{5.300} satisfying condition \eqref{parabolic201}. By the discussion in Section \ref{s2} following the formulation of $({\bf H_\Gamma})$, we know that in this case, $\Gamma$ is positive and satisfies \eqref{convol}, \eqref{gauss} and \eqref{boundgauss}. Moreover, by \eqref{gauss-simple}, property  \eqref{parabolic200} holds if 
\beq
\label{dalang-cond}
\int_{\rek} \frac{\mu(d\xi)}{1+|\xi|^2} < \infty
\eeq
(see e.g. \cite{dalang-1999}).  In particular, $({\bf H_\Gamma})$ holds. In order to obtain sample path regularity properties of the solution to \eqref{5.1}, we introduce a modified version of $({\bf H_\Gamma})$:
\medskip

\noindent $({\bf \bar{H}_\Gamma})$ The functions $a_{i,j}(t,x)$, $b_i(t,x)$, $c(t,x)$ from $[0,T]\times D$ into $\re$ are $\cB_{[0,T]}\times \cB_D$-measurable and such that the following conditions hold:
\begin{description}
\item{(1)} The $a_{i,j}(t,x)$, $1\le i,j\le k$, satisfy \eqref{parabolic201}.
\item{(2) }The fundamental solution (or the Green's function) of $\cL$ is a function $\Gamma(t,x;s,y)$ from $\{(t,x,s,y)\in [0,T]\times D\times [0,T]\times D: 0\le s<t\le T\}$ into $\re$ that is jointly Borel measurable, that is, $\cB_{[0,T]}\times \cB_{D}\times \cB_{[0,T]}\times \cB_{D}$-measurable.
\item{(3)} The spectral measure $\mu$ is such that there exists $\eta\in\, ]0,1[$ such that
\beq
\label{5.311}
\int_{\rek} \frac{\mu(d\xi)}{(1+|\xi|^2)^\eta} < \infty.
\eeq
\end{description}
Observe that \eqref{5.311} is stronger than \eqref{dalang-cond} and that  $({\bf \bar{H}_\Gamma})$ implies $({\bf H_\Gamma})$ with $S(t,x)$ defined in \eqref{relevant}.

The main result of this section is the following theorem on sample path regularity of the random field solutions to \eqref{5.1}.

\begin{thm}
 \label{ch5-t30}
We assume that $({\bf H_\Lambda})$, $({\bf \bar{H}_\Gamma})$, $({\bf H_L})$ and $({\bf H_I})$ hold.
Let $(u(t, x), \, (t,x)\in[0,T]\times D)$ be the solution of \eqref{5.1} given in Theorem \ref{ch5-t2}. Then
for every $p\in[2,\infty[$, $\gamma_1\in \left]0,\frac{1-\eta}{2}\right [$ and $\gamma_2\in\, ]0,1-\eta[$, there exists a constant $C>0$ such that for all $s,t\in[0,T]$, $x,y\in D$,
\beq
 \label{5.312}
 E\left(|u(t,x) - u(s,y)|^p\right) \le C\left(|t-s|^{\gamma_1 p} + |x-y|^{\gamma_2 p}\right) + |I_0(t,x)-I_0(s,y)|^p.
\eeq
As a consequence, for $\gamma_1 \in \left]0,\frac{1-\eta}{2}\right [$ and $\gamma_2\in\, ]0,1-\eta[$, there is a version of $(u(t, x) - I_0(t, x), \, (t, x) \in [0, T] \times D)$ with  locally H\"older continuous sample paths with exponents $(\gamma_1, \gamma_2)$.
\end{thm}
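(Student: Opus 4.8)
The plan is to exploit the decomposition $u(t,x)=I_0(t,x)+\cI(t,x)+\cJ(t,x)$ of \eqref{i-si}--\eqref{j-pi} (with $u$ in place of $u^n$), recalling that $({\bf \bar{H}_\Gamma})$ implies $({\bf H_\Gamma})$ with $S$ as in \eqref{relevant}, so that Theorem~\ref{ch5-t2} applies and $\sup_{(t,x)}E(|u(t,x)|^p)<\infty$ for every $p\ge2$ by \eqref{5.302}. Since $u-I_0=\cI+\cJ$ and $I_0$ is deterministic, the whole statement reduces to proving that, for every $p\ge2$,
\[
\big\|(\cI+\cJ)(t,x)-(\cI+\cJ)(s,y)\big\|_{L^p(\Omega)}\le C\big(|t-s|^{\gamma_1}+|x-y|^{\gamma_2}\big):
\]
then \eqref{5.312} follows via $|a+b|^p\le 2^{p-1}(|a|^p+|b|^p)$, and, letting $p\to\infty$ and invoking the (anisotropic) Kolmogorov continuity criterion in the $k+1$ variables $(t,x)$, one obtains for every $\gamma_1<\tfrac{1-\eta}{2}$, $\gamma_2<1-\eta$ a version of $u-I_0$ with locally H\"older continuous paths of exponents $(\gamma_1,\gamma_2)$. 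I would prove the displayed bound by the factorization method of Da Prato, Kwapie\'n and Zabczyk.

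First I would fix $\alpha:=\tfrac{1-\eta}{2}\in(0,\tfrac12)$. Under $({\bf \bar{H}_\Gamma})$, $\Gamma>0$ and satisfies the semigroup property \eqref{convol} and Gaussian bounds \eqref{gauss}; inserting $\pi/\sin(\pi\alpha)=\int_r^t(t-\sigma)^{\alpha-1}(\sigma-r)^{-\alpha}\,d\sigma$ together with \eqref{convol} and applying a stochastic (resp.\ ordinary) Fubini theorem, I would rewrite
\[
\cI(t,x)=c_\alpha\!\int_0^t(t-\sigma)^{\alpha-1}\!\!\int_D\!\Gamma(t,x;\sigma,z)\,Y_\sigma(z)\,dz\,d\sigma,\qquad
\cJ(t,x)=c_\alpha\!\int_0^t(t-\sigma)^{\alpha-1}\!\!\int_D\!\Gamma(t,x;\sigma,z)\,Z_\sigma(z)\,dz\,d\sigma,
\]
with $c_\alpha=\sin(\pi\alpha)/\pi$, $Y_\sigma(z)=\int_0^\sigma\!\int_D(\sigma-r)^{-\alpha}\Gamma(\sigma,z;r,y)\sigma(r,y,u(r,y))\,W(dr,dy)$, $Z_\sigma(z)=\int_0^\sigma\!\int_D(\sigma-r)^{-\alpha}\Gamma(\sigma,z;r,y)b(r,y,u(r,y))\,dy\,dr$, choosing first jointly measurable adapted versions of $Y,Z$ as in Step~2 of the proof of Theorem~\ref{ch5-t2}. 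The crucial estimate is
\[
M_p:=\sup_{\sigma\in[0,T],\,z\in D}E\big(|Y_\sigma(z)|^p\big)+\sup_{\sigma\in[0,T],\,z\in D}E\big(|Z_\sigma(z)|^p\big)<\infty.
\]
For $Z$ this is immediate from $\int_D\Gamma(\sigma,z;r,y)\,dy\le C$ (see \eqref{boundgauss}), $\int_0^\sigma(\sigma-r)^{-\alpha}\,dr\le C$ (here $\alpha<1$), the linear growth of $b$, and \eqref{5.302}. For $Y$ I would rerun the Burkholder/Lemma~\ref{l-1} estimate that produced \eqref{3-10}--\eqref{3-11}, now carrying the weight $(\sigma-r)^{-2\alpha}$; using \eqref{gauss-simple}, \eqref{relevant} and the Gaussian identity $|\tf S(r)(\xi)|^2=C_0e^{-c_0r|\xi|^2}$, it reduces to the finiteness of
\[
\int_0^Tr^{-2\alpha}\!\int_{\rek}\mu(d\xi)\,e^{-c_0r|\xi|^2}\,dr=\int_{\rek}\mu(d\xi)\!\int_0^Tr^{-2\alpha}e^{-c_0r|\xi|^2}\,dr\le C\int_{\rek}\frac{\mu(d\xi)}{(1+|\xi|^2)^{1-2\alpha}},
\]
which holds because $1-2\alpha=\eta$ and \eqref{5.311} is in force; this is the one spot where both the value of $\alpha$ and the integrability condition on $\mu$ enter.

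Next I would estimate increments of the factorized formulas. As $\cI,\cJ$ now share the same structure, write $V$ for either, $\Psi_\sigma$ for $Y_\sigma$ or $Z_\sigma$, and $h^\tau_\sigma(x):=\int_D\Gamma(\tau,x;\sigma,z)\Psi_\sigma(z)\,dz$; by Minkowski's integral inequality and $\Gamma\ge0$, $\|h^\tau_\sigma(x)\|_{L^p(\Omega)}\le M_p^{1/p}\int_D\Gamma(\tau,x;\sigma,z)\,dz\le CM_p^{1/p}$. For the spatial increment, the gradient bound in \eqref{gauss} gives $\int_D|\Gamma(t,x;\sigma,z)-\Gamma(t,y;\sigma,z)|\,dz\le C\min(1,|x-y|(t-\sigma)^{-1/2})\le C|x-y|^{\gamma_2}(t-\sigma)^{-\gamma_2/2}$, so Minkowski's inequality in $\sigma$ yields $\|V(t,x)-V(t,y)\|_{L^p(\Omega)}\le CM_p^{1/p}|x-y|^{\gamma_2}\int_0^t(t-\sigma)^{\alpha-1-\gamma_2/2}\,d\sigma\le C|x-y|^{\gamma_2}$, the integral converging as $\gamma_2<2\alpha=1-\eta$. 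For the temporal increment ($s<t$), split $V(t,x)-V(s,x)=c_\alpha(A_1+A_2+A_3)$ with $A_1=\int_s^t(t-\sigma)^{\alpha-1}h^t_\sigma(x)\,d\sigma$, $A_2=\int_0^s[(t-\sigma)^{\alpha-1}-(s-\sigma)^{\alpha-1}]h^t_\sigma(x)\,d\sigma$, $A_3=\int_0^s(s-\sigma)^{\alpha-1}(h^t_\sigma(x)-h^s_\sigma(x))\,d\sigma$; then $\|A_1\|_{L^p}\le C(t-s)^\alpha$, $\|A_2\|_{L^p}\le C\int_0^s[(s-\sigma)^{\alpha-1}-(t-\sigma)^{\alpha-1}]\,d\sigma\le C(t-s)^\alpha$ by sub-additivity of $\sigma\mapsto\sigma^\alpha$, and, using the time-derivative bound in \eqref{gauss} together with the trivial bound $\le C$ and $\log(1+v)\le v$, $\int_D|\Gamma(t,x;\sigma,z)-\Gamma(s,x;\sigma,z)|\,dz\le C\min\big(1,\log\tfrac{t-\sigma}{s-\sigma}\big)\le C\min\big(1,\tfrac{t-s}{s-\sigma}\big)\le C\big(\tfrac{t-s}{s-\sigma}\big)^{\gamma_1}$, whence $\|A_3\|_{L^p}\le C(t-s)^{\gamma_1}\int_0^s(s-\sigma)^{\alpha-1-\gamma_1}\,d\sigma\le C(t-s)^{\gamma_1}$ since $\gamma_1<\alpha=\tfrac{1-\eta}{2}$. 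Writing $V(t,x)-V(s,y)=(V(t,x)-V(s,x))+(V(s,x)-V(s,y))$, with the spatial bound uniform in $s$, and summing over $V=\cI,\cJ$ gives the displayed bound, completing the proof.

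The step I expect to be the main obstacle is the temporal increment, and within it the term $A_3$: it forces use of the time-derivative Gaussian bound in \eqref{gauss} (hence of the uniform parabolicity assumption $({\bf \bar{H}_\Gamma})$(1)) and a careful interpolation converting the merely logarithmic bound $\log\tfrac{t-\sigma}{s-\sigma}$ into a genuine power $(t-s)^{\gamma_1}$ still integrable against $(s-\sigma)^{\alpha-1}$ on $\sigma\in(0,s)$ — this trade-off is exactly what caps $\gamma_1$ at $\tfrac{1-\eta}{2}$ (and $\gamma_2$ at $1-\eta$). Two further technical points, routine in view of the proof of Theorem~\ref{ch5-t2}, are the rigorous justification of the stochastic Fubini step in the factorization and the construction of jointly measurable adapted modifications of $(\sigma,z)\mapsto Y_\sigma(z)$ and $(\sigma,z)\mapsto Z_\sigma(z)$.
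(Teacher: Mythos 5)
Your proof is correct and follows essentially the same route as the paper: the factorization method applied to the stochastic integral with the auxiliary process $Y_\delta$, the key moment bound $\sup E(|Y_\delta(t,x)|^p)<\infty$ reduced via the Gaussian form of $\cF S$ to the integrability condition \eqref{5.311} (your endpoint choice $\alpha=\tfrac{1-\eta}{2}$ works because \eqref{5.311} holds at $\eta$ itself, whereas the paper keeps $\delta<\tfrac{1-\eta}{2}$ strict), and then the same deterministic Gaussian-kernel estimates for the spatial and temporal increments of the smoothed kernel. The one genuine (but harmless) deviation is that you also run the pathwise integral $\cJ$ through the deterministic factorization identity, which unifies the two terms at the cost of capping its H\"older exponents at $(\tfrac{1-\eta}{2},1-\eta)$, while the paper estimates $\cJ$ directly (Lemmas \ref{ch5-p30} and \ref{ch5-p31}) and obtains the sharper exponents $\gamma<1$ in time and $1$ in space for that term — sharper than what the theorem ultimately requires.
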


\begin{remark}
\label{rd-4-r1}
In the case of the Riesz kernel $\Lambda(dx) = \vert x \vert^{-\beta}$, $\beta \in \, ]0, 2 \wedge k[$, and $\mu(d\xi) = \vert \xi \vert^{\beta - k}\, d\xi$, then condition \eqref{5.311} is satisfied if and only if $\eta > \beta/2$. Therefore, Theorem \ref{ch5-t30} states that for
$$\gamma_1 \in \left]0,\frac{2-\beta}{4}\right [\quad \text{and}\quad \gamma_2\in\, \left]0,\frac{2-\beta}{2}\right[,
$$
there is a version of $(u(t, x), \, (t, x) \in [0, T] \times D)$ with  locally H\"older continuous sample paths with exponents $(\gamma_1, \gamma_2)$. These intervals for $\gamma_1$ and $\gamma_2$ are known to be optimal: see \cite[Proposition 2.1]{DKN2013}.

\end{remark}

%\begin{remark}
%\label{ch5-r10}
%By analogy with the linear Gaussian case, we would have expected  $\gamma_1\in \left]0,\frac{1-\eta}{2}\right [$. In dimension $k\le2$, this is actually what we obtain. %However, for $k\ge 3$, according to the previous theorem, this holds if  in \eqref{5.311} $\eta>\frac{k-2}{k+2}$.
%\end{remark}

%%%%%% Factorization method
\medskip

\noindent{\em The factorization method}
\medskip

In \cite{dp-k-z-1987}, Da Prato, Kwapie\'n and Zabczyk introduced a technique known as the {\em factorization method}. Initially, it consisted of a factorization of the semigroup generated by the infinitesimal operator that defined certain evolution equations. In the context of this article, in the case $D = \R^k$, it will allow us to express the stochastic integral \eqref{si-ex-1} as a space-time convolution of a modified fundamental solution $\tilde \Gamma$ with another stochastic integral similar to \eqref{si-ex-1}. Often, this makes it possible to obtain (essentially) optimal space-time  sample path regularity properties of the stochastic integral. 

The factorization method will be used in the proof of Theorem \ref{ch5-t30}. In the next statement, we give the fundamental formula of \cite{dp-k-z-1987}.

\begin{prop}
\label{ch5-p30}
Let $\Gamma$ satisfy $({\bf H_\Lambda})$ and $({\bf \bar{H}_\Gamma})$.
 Let
$Z = (Z(t,x),\, (t,x)\in[0,T]\times D)
$ be a jointly measurable adapted process such that
\beqn
\sup_{(t,x)\in[0,T]\times D}E(|Z(t,x)|^p) < \infty.
\eeqn
Fix $\delta\in\left]0,\frac{1-\eta}{2}\right[$ and for $(t,x)\in [0,T]\times D$, define
\beq
\label{5.313}
Y_\delta(t,x) = \int_0^t \int_{D} \Gamma(t,x;s,y) (t-s)^{-\delta}Z(s,y) W(ds,dy).
\eeq
Then the stochastic process $(Y_\delta(t,x),\, (t,x)\in [0,T]\times D)$ is a well-defined jointly measurable adapted process such that, for any $p\in[2,\infty[$,
\beq
\label{5.3130}
\sup_{\txind} E\left(\left\vert Y_\delta(t,x)\right\vert^p\right) < \infty.
\eeq
In addition, the following formula holds:
\begin{align}
\label{5.314}
&\int_0^t \int_{D} \Gamma(t,x;s,y)Z(s,y) W(ds,dy)\nonumber\\
&\qquad  = \frac{\sin(\pi\delta)}{\pi} \int_0^t ds\, (t-s)^{\delta-1}\int_{D} dy\,  \Gamma(t,x;s,y)\, Y_\delta(s,y).
\end{align}
\end{prop}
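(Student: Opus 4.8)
The plan is to prove the three assertions of Proposition \ref{ch5-p30} in the order they are stated: first that $Y_\delta$ is well-defined with finite $p$-th moments, then that it has a jointly measurable adapted version, and finally the factorization identity \eqref{5.314}. For the first assertion, I would apply the moment estimate \eqref{2-3} (or rather its $L^p$ version \eqref{3-11}) to the integrand $G_{t,x}(s,y) = \Gamma(t,x;s,y)(t-s)^{-\delta}Z(s,y)1_D(y)$. Bounding $|\Gamma|$ by $CS(t-s,x-y)$ via $({\bf \bar H_\Gamma})$ and \eqref{relevant}, and using Burkholder's and H\"older's inequalities exactly as in \eqref{3-10}, the problem reduces to checking that
\beqn
\int_0^t ds\, (t-s)^{-2\delta}\,\kappa_1(t-s) < \infty,
\eeqn
where by \eqref{kappas-bis}, $\kappa_1(s)\le \int_{\rek}\mu(d\xi)|\tf S(s)(\xi)|^2$. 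Here $\tf S(s)(\xi)$ is the Fourier transform of the Gaussian $x\mapsto s^{-k/2}\exp(-c|x|^2/s)$, which equals (up to constants) $\exp(-s|\xi|^2/(4c))$, so $|\tf S(s)(\xi)|^2 \asymp e^{-c' s|\xi|^2}$. Thus $\kappa_1(s)\le C\int_{\rek}\mu(d\xi)\, e^{-c's|\xi|^2}$, and one shows that $e^{-c's|\xi|^2} \le C s^{-\eta}(1+|\xi|^2)^{-\eta}$ for any $\eta\in[0,1]$, so that $\kappa_1(s) \le C s^{-\eta}\int_{\rek}\mu(d\xi)(1+|\xi|^2)^{-\eta}$, which is finite by \eqref{5.311}. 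The remaining time integral is $\int_0^t (t-s)^{-2\delta-\eta}\, ds$, which converges precisely because $\delta < \frac{1-\eta}{2}$ forces $2\delta + \eta < 1$. This yields \eqref{5.3130}.

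For the joint measurability and adaptedness of $Y_\delta$, I would invoke the same argument used in Step 2 of the proof of Theorem \ref{ch5-t2}: namely, one extends Propositions 2.6.2 and 2.6.3 of \cite{d-ss-2024} (which provide jointly measurable adapted modifications of stochastic-integral processes of the form $(t,x)\mapsto\int_0^t\int_D\Gamma(t,x;s,y)\Phi(s,y)W(ds,dy)$) to the present setting; the extra factor $(t-s)^{-\delta}$ causes no difficulty since, as just shown, it only strengthens the integrability in $s$.

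The main work is the factorization identity \eqref{5.314}, and this is the step I expect to be the main obstacle. The classical argument of \cite{dp-k-z-1987} runs as follows. Start from the elementary Beta-function identity: for $0<s<r<t$,
\beqn
\int_s^t (t-r)^{\delta-1}(r-s)^{-\delta}\, dr = \frac{\pi}{\sin(\pi\delta)},
\eeqn
valid for $\delta\in\,]0,1[$. Multiply this by $\Gamma(t,x;s,y)$ (for the noise-integrand pair $(s,y)$), insert the semigroup property \eqref{convol}, namely $\Gamma(t,x;s,y) = \int_D dz\,\Gamma(t,x;r,z)\Gamma(r,z;s,y)$ for $s<r<t$, and then substitute into the stochastic integral $\int_0^t\int_D\Gamma(t,x;s,y)Z(s,y)W(ds,dy)$. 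Formally this gives
\beqn
\frac{\sin(\pi\delta)}{\pi}\int_0^t dr\,(t-r)^{\delta-1}\int_D dz\,\Gamma(t,x;r,z)\left[\int_0^r\int_D\Gamma(r,z;s,y)(r-s)^{-\delta}Z(s,y)W(ds,dy)\right],
\eeqn
and the inner bracket is exactly $Y_\delta(r,z)$, yielding \eqref{5.314}. The delicate point is justifying the interchange of the (deterministic) $dr$-integral with the stochastic $W(ds,dy)$-integral — a stochastic Fubini theorem — and checking that all the resulting integrals are absolutely (resp.\ $L^2$-) convergent so that the manipulations are legitimate. Concretely, I would verify the hypothesis of the stochastic Fubini theorem (e.g.\ in the form available in \cite{d-ss-2024} or \cite{dalang-quer-s-2011}) by showing
\beqn
\int_0^t dr\,(t-r)^{\delta-1}\left(E\left\|1_{[0,r]}(\cdot)\,\Gamma(t,x;\cdot,\ast)\,\textstyle\int_{\cdot}^{t}\cdots\right\|_{U_T}^2\right)^{1/2} < \infty,
\eeqn
which reduces, via the Gaussian bound \eqref{gauss-simple}, the semigroup property, and Lemma \ref{l-1}, to finiteness of a deterministic Beta-type double integral of the kind $\int_0^t dr\,(t-r)^{\delta-1}\big(\int_0^r ds\,(r-s)^{-2\delta}\kappa_1(r-s)\big)^{1/2}$; using $\kappa_1(u)\le Cu^{-\eta}$ and the constraint $2\delta+\eta<1$ one sees the inner integral is $O(r^{1-2\delta-\eta})$ and the outer integral then converges. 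Once the stochastic Fubini is in place, the identity \eqref{5.314} follows by the purely deterministic computation above, and one should also remark that the right-hand side of \eqref{5.314} is itself a well-defined pathwise integral because $Y_\delta$ has finite moments by \eqref{5.3130} and $\Gamma(t,x;s,y)\le CS(t-s,x-y)$ together with \eqref{boundgauss} controls the $dy$-integral.
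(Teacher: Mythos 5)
Your proposal is correct and follows essentially the same route as the paper: Burkholder/H\"older plus the $U_T$-norm computation reduced, via the Gaussian bound on $\Gamma$, to a spectral integral controlled by \eqref{5.311} and $2\delta<1-\eta$; measurability via an extension of \cite[Propositions 2.6.2--2.6.3]{d-ss-2024}; and the factorization identity via the Beta-function identity, the semigroup property \eqref{convol}, and a stochastic Fubini theorem. The only (harmless) deviation is that you bound $e^{-c's|\xi|^2}\le C s^{-\eta}(1+|\xi|^2)^{-\eta}$ pointwise, whereas the paper integrates in time first and splits into $|\xi|\le 1$ and $|\xi|>1$; both yield the same conclusion.
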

\begin{proof}
First we prove that the process $(Y_\delta(t,x), (t,x)\in [0,T]\times D)$ is well-defined. For this, we have to check that
the stochastic process
\beqn
  \left (\Gamma(t,x;s,y) (t-s)^{-\delta}Z(s,y),\, (s,y)\in[0,T]\times D \right)
\eeqn
 belongs to $L^2(\Omega; U_T)$. %We will give the proof under Assumption \ref{ch5-a1}.

Fix $0<s<t\le T$, $x\in D$. As in the computations for \eqref{comp-norm}, we obtain
\begin{align*}
&E\left(\Vert\Gamma(t,x;\cdot,\ast) (t-\cdot)^{-\delta}Z(\cdot,\ast)1_D(\ast)\Vert_{U_T}^2\right)\\
& \quad= E\left(\int_0^t ds\, (t-s)^{-2\delta}\right.\\
&\left.\qquad\qquad\times\int_{D}dy \int_{D} dz\,  \Gamma(t,x;s,y)Z(s,y)f(y-z)\Gamma(t,x;s,z)Z(s,z)\right)\\
 &\quad\le E\left(\int_0^t ds\, (t-s)^{-2\delta}\right.\\
 &\left.\qquad\qquad\times\int_{D}dy \int_{D} dz\,  \Gamma(t,x;s,y)\, |Z(s,y)|\, f(y-z)\, \Gamma(t,x;s,z) \, |Z(s,z)|\, \right)\\
&\quad\le %\sup_{(t,x)\in[0,T]\times D}
  C_Z \,% E\left(\vert Z(t,x)\vert^2\right)\\
 %&\quad\quad \times
 \int_0^t ds\,  (t-s)^{-2\delta} \int_{\rek}dy \int_{\rek} dz\,  S(t-s,x-y)f(y-z)S(t-s,x-z)\\
& \quad \le C_Z   \int_0^t ds\,  (t-s)^{-2\delta}\int_{\rek}\mu(d\xi)\, |\tf S(t-s)(\xi)|^2,
 \end{align*}
 where $C_Z:=  \sup_{(t,x)\in[0,T]\times D} E\left(\vert Z(t,x)\vert^2\right)$.

% By the formula for the Fourier transform of $G$
Since $\cF S(s, *)(\xi) = \exp(-\half c s\, \vert \xi\vert^2)$, by applying Fubini's theorem and then integrating with respect to the time variable, we obtain
\begin{align*}
&\int_0^t ds\, (t-s)^{-2\delta}\int_{\rek}\mu(d\xi)\, |\tf S(t-s)(\xi)|^2\\
&\qquad = \int_0^t ds\, (t-s)^{-2\delta}\int_{\rek}\mu(d\xi)\, \exp(-c(t-s)|\xi|^2)\\
&\qquad = \int_{\rek}\mu(d\xi) \int_0^t  ds\,  s ^{-2\delta}\exp(-c s\, |\xi|^2)\\
&\qquad =  c^{2 \delta - 1} \int_{\rek}\mu(d\xi)\ |\xi|^{4\delta-2} \int_0^{ct|\xi|^2} dr\, r^{-2\delta} \exp(-r),
\end{align*}
where, in the last equality, we have applied the change of variable $r= c s|\xi|^2$.

If  $|\xi|\le 1$, then for $t \in [0, T]$, $|\xi|^{4\delta-2} \int_0^{t|\xi|} dr\, r^{-2\delta} \exp(-r)\le C$. Moreover, if $|\xi|>1$, then
\beqn
|\xi|^{4\delta-2} \int_0^{c t|\xi|} dr\, r^{-2\delta} \exp(-r) \le \Gamma_E(1-2\delta) (1+|\xi|^2)^{2\delta-1},
\eeqn
where $\Gamma_E$ again denotes the Euler Gamma function.

Consequently,
\beqn
\int_0^t ds\, (t-s)^{-2\delta}\int_{\rek}\mu(d\xi)\, |\tf S(t-s)(\xi)|^2 \le C \int_{\rek} \frac{\mu(d\xi)}{(1+|\xi|^2)^{1-2\delta}} < \infty,
\eeqn
where we have used assumption \eqref{5.311} and the fact that $\delta\in\left]0,\frac{1-\eta}{2}\right[$.
This proves that  $E\left(\Vert\Gamma(t,x;\cdot,\ast) (t-\cdot)^{-\delta}Z(\cdot,\ast) 1_D(\ast)\Vert_{U_T}^2\right) < \infty$. %Observe that the Fourier transform of $\Gamma(t,x; s,\cdot) Z(s,\cdot)$ is a function. Hence, by Proposition \ref{ch2-incluU} (a), this implies that $(\Gamma(t,x;s,y) (t-s)^{-\delta}Z(s,y), (s,y)\in[0,T]\times D)$ belongs to $L^2(\Omega; U_T)$.

The joint measurability of the process $(Y_\delta(t,x),\, (t,x)\in[0,T]\times \rek)$ follows from an extension of Proposition 2.6.2 in \cite{d-ss-2024}.
Let us now prove \eqref{5.3130}. Burkholder's inequality %\cite[Proposition 3.3 and Remark 3.4]{nualartquer} 
yields
\beq
\label{5.3131}
E\left(\left\vert Y_\delta(t,x)\right\vert^p\right) \le CE\left(\int_0^t ds\,  (t-s)^{-2\delta}\Vert \Gamma(t,x;s,*) Z(s,*)1_D\Vert^2_U \right)^{\frac{p}{2}}.
\eeq
For any $\delta\in\left]0,\frac{1}{2}\right[$, the function $s\mapsto (t-s)^{-2\delta}$ is integrable on $[0,t]$. Hence, by applying H\"older's inequality, the right-hand side of \eqref{5.3131} is bounded above by
\beqn
C\left( \int_0^t  ds\, (t-s)^{-2\delta}\right)^{\frac{p}{2}-1} \int_0^t  ds\, (t-s)^{-2\delta} E\left(\Vert \Gamma(t,x;s,*) Z(s,*)1_D\Vert^p_U\right).
\eeqn
Because of the properties of $\Gamma$ and $Z$, we can apply Proposition \ref{ch2-incluU} and Lemma \ref{l-1} to obtain %From the properties of $\Gamma$ and $Z$, we have \ref{?} IS THIS because of Assump. 2.1 or 2.2?
\begin{align*}
&E\left(\Vert \Gamma(t,x;s,*) Z(s,*)1_D\Vert^p_U\right)\\
&\quad \le \sup_{\txind} E(|Z(t,x)|^p)\\
&\quad\qquad \qquad\times \left(\int_{D} dy  \int_{D} dz\, G(t-s,x-y) f(y-z) G(t-s,x-z)\right)^{\frac{p}{2}}\\
&\quad \le C  \left(\int_{\rek}  \mu(d\xi)\, |\tf G(t-s)(\xi)|^2\right)^{\frac{p}{2}}.
\end{align*}
Thus,
\beqn
E\left(\left\vert Y_\delta(t,x)\right\vert^p\right) \le C\left(\int_0^t  ds \, (t-s)^{-2\delta}\right)^{\frac{p}{2}},
\eeqn
proving \eqref{5.3130}.
% This ends the proof of the statements on $Y_\delta$.

Formula \eqref{5.314} follows from \eqref{5.313}, \eqref{convol}, an extension of the stochastic Fubini's theorem \cite[Theorem 2.4.1]{d-ss-2024} (or \cite[Theorem 4.18]{dz}), and the formula
\beqn
\int_0^t ds\, (t-s)^{\delta-1} s^{-\delta} = \frac{\pi}{\sin(\pi\delta)},
\eeqn
which is obtained by using the Beta function. Indeed,
\begin{align*}
& \int_0^t ds \int_{D} dy\, \Gamma(t,x;s,y) (t-s)^{\delta-1}Y_\delta(s,y)\\
&\quad = \int_0^t ds  \int_{D} dy\, \Gamma(t,x;s,y) (t-s)^{\delta-1}\\
&\qquad \qquad\times\left( \int_0^s \int_{D} \Gamma(s,y;r,z) (s-r)^{-\delta}Z(r,z)\, W(dr,dz) \right)\\
&\quad = \int_0^t  \int_{D}  W(dr,dz)\, Z(r,z) \left(\int_r^t ds\,  (t-s)^{\delta-1} (s-r)^{-\delta} \right)\\
&\qquad\qquad\times\left(\int_D dy\,  \Gamma(t,x;s,y) \Gamma(s,y;r,z)\right)
\end{align*}
Since
\beqn
\int_r^t ds\,  (t-s)^{\delta-1} (s-r)^{-\delta} =\int_0^{t-r} d\rho\,  (t-r-\rho)^{\delta-1} \rho^{-\delta}= B(1-\delta,\delta)= \frac{\pi}{\sin(\pi\delta)},
\eeqn
where $B(\alpha,\beta)$ denotes the Beta function, the last expression in the array is equal to
\beqn
\left(\frac{\sin(\pi\delta)}{\pi}\right)^{-1}\int_0^t  \int_{D}  Z(r,z) \Gamma (t,x;r,z)\, W(dr,dz),
\eeqn
where we have used \eqref{convol}
\end{proof}
\bigskip

\begin{remark}
\label{ch5-r2}
%\begin{enumerate}
%\item
In the definition of the stochastic integral in \eqref{5.313}, the singularity of the function $s\mapsto\Gamma(t,x;s,y)(t-s)^{-\delta}$ on the diagonal $s=t$ is stronger than that of $s\mapsto \Gamma(t,x;s,y)$. The role of Hypothesis \eqref{5.311} is precisely to compensate this fact. Indeed, this condition means that the noise $W$ is sufficiently smooth to integrate this stronger singularity.
%\item The interest and success of the identity \eqref{5.314} can be explained as follows. The choice of the parameter $\delta$ is made in such a way that $(s,y) \mapsto \Gamma(t,x;s,y)(t-s)^{\delta-1}$ is the density of a finite measure on $[0,T]\times D$. Hence, by taking $L^p$-moments of the right-hand side of \eqref{5.314}, and because of property \eqref{5.3130}, we will have to handle a singularity on the diagonal $t=s$ of the type $(t-s)^{-\frac{k}{2}+\delta-1}$. In contrast, because of the $L^p$-estimates of the stochastic integral, the left-hand side of \eqref{5.314} leads to a singularity of order $(t-s)^{-k}$, which is stronger \ref{?} when $k \geq 2$.
%\end{enumerate}
\end{remark}
\vskip 12pt

%%%%%%starting the proof of Theorem \ch5-t30
%%%%%%%
\noindent{\em Towards the proof of Theorem \ref{ch5-t30}}
\medskip

The next several assertions contain the necessary ingredients for the proof of Theorem \ref{ch5-t30}.
\vskip 12pt

\noindent{\em Increments of the pathwise integral in \eqref{5.301}}
\medskip

\begin{lemma}
\label{ch5-p30}
(Spatial increments) The hypotheses are $({\bf H_\Lambda})$, $({\bf \bar{H}_\Gamma})(1)$-$(2)$, \eqref{dalang-cond}, $({\bf H_L})$ and $({\bf H_I})$, and $(u(t,x),\, \txind)$ is defined in \eqref{5.301}. Then for all
$p\in [1,\infty[$, there is $C_p < \infty$ such that for all $x_1, x_2\in D$,
\begin{align}
\label{5.315}
&\sup_{t\in[0,T]} E\left(\left\vert \int_0^t ds  \int_{D} dy\, [\Gamma(t,x_1;s,y) - \Gamma(t,x_2;s,y)]b(s, y,u(s,y))\right\vert^p\right)\notag\\
&\qquad \le C_p\, |x_1-x_2|^p.
\end{align}
\end{lemma}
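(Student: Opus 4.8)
The plan is to reduce \eqref{5.315} to an $L^1$-in-$(s,y)$ estimate on the spatial increment of the kernel and then feed in the uniform moment bounds on $u$. Write $\Delta_x\Gamma(s,y):=\Gamma(t,x_1;s,y)-\Gamma(t,x_2;s,y)$. The core estimate I would establish is
\[
\int_0^t ds\int_D dy\,|\Delta_x\Gamma(s,y)|\le C\,|x_1-x_2|,\qquad \text{uniformly in }t\in[0,T],\ x_1,x_2\in D .
\]
Granting this, I would apply Jensen's inequality (when $p=1$) or H\"older's inequality with conjugate exponents $p$ and $p/(p-1)$ (when $p>1$) with respect to the \emph{finite} measure $|\Delta_x\Gamma(s,y)|\,ds\,dy$ on $[0,t]\times D$, obtaining
\[
\Big|\int_0^t ds\int_D dy\,\Delta_x\Gamma(s,y)\,b(s,y,u(s,y))\Big|^p\le\Big(\int_0^t ds\int_D dy\,|\Delta_x\Gamma(s,y)|\Big)^{p-1}\int_0^t ds\int_D dy\,|\Delta_x\Gamma(s,y)|\,|b(s,y,u(s,y))|^p .
\]
Taking expectations, using the uniform linear growth of $b$ from $({\bf H_L})$ together with the moment bound \eqref{5.302} of Theorem \ref{ch5-t2}, so that $\sup_{(s,y)\in[0,T]\times D}E\big(|b(s,y,u(s,y))|^p\big)\le 2^{p-1}\bar c^p\big(1+\sup_{(s,y)}E(|u(s,y)|^p)\big)=:C_p'<\infty$, one gets the right-hand side bounded by $C_p'\big(\int_0^t\int_D|\Delta_x\Gamma|\big)^p\le C_p'(C|x_1-x_2|)^p$, uniformly in $t\in[0,T]$, which is exactly \eqref{5.315}.

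For the kernel estimate, note that $({\bf \bar{H}_\Gamma})(1)$ provides uniform parabolicity \eqref{parabolic201}, hence the Gaussian bounds \eqref{gauss} hold; in particular $|\nabla_x\Gamma(t,z;s,y)|\le C(t-s)^{-\frac{k+1}{2}}\exp\!\big(-c\,|z-y|^2/(t-s)\big)$ for $z,y\in D$, $0\le s<t\le T$. I would write the increment as a path integral of the gradient, $\Delta_x\Gamma(s,y)=\int_0^1 (x_1-x_2)\cdot\nabla_x\Gamma\big(t,x_1+\theta(x_2-x_1);s,y\big)\,d\theta$, so that $|\Delta_x\Gamma(s,y)|\le|x_1-x_2|\int_0^1|\nabla_x\Gamma(t,x_1+\theta(x_2-x_1);s,y)|\,d\theta$, and then apply Fubini to carry out the $y$-integration first, for each fixed $\theta$: since $\int_{\rek}\exp\!\big(-c\,|z-y|^2/(t-s)\big)\,dy=(\pi(t-s)/c)^{k/2}$, one finds $\int_D dy\,|\Delta_x\Gamma(s,y)|\le C\,|x_1-x_2|\,(t-s)^{-1/2}$. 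Finally, $(t-s)^{-1/2}$ is integrable on $[0,t]$ with $\int_0^t(t-s)^{-1/2}\,ds=2\sqrt t\le 2\sqrt T$, which yields the required bound $\int_0^t\int_D|\Delta_x\Gamma|\le C|x_1-x_2|$ with no need to distinguish whether $|x_1-x_2|$ is small or large.

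The main obstacle is precisely this kernel estimate, and within it the one genuinely delicate point is the passage from the pointwise gradient bound to the $L^1$ bound: one must integrate in the spatial variable \emph{before} integrating along the segment (equivalently, one must not pull a supremum over the segment out of the $y$-integral), since otherwise the remaining time integral carries a non-integrable factor $(t-s)^{-1}$. The gain of the integrable singularity $(t-s)^{-1/2}$ — obtained by differentiating $\Gamma$ once in $x$, and mirrored in the comparison of the admissible exponents $\gamma_2<1-\eta$ versus $\gamma_1<(1-\eta)/2$ in Theorem \ref{ch5-t30} — is what makes the argument close. When $D$ has a boundary, the only change needed is to replace the straight segment $[x_1,x_2]$ (which lies in $D$ automatically when $D=\rek$, a half-space, or more generally a convex domain) by a rectifiable path joining $x_1$ to $x_2$ inside $D$ of length comparable to $|x_1-x_2|$; such a path exists for domains with smooth boundary (at least for $x_1,x_2$ close, which is all that is needed for the local H\"older statement of Theorem \ref{ch5-t30}), and the Gaussian bounds \eqref{gauss} apply verbatim along it.
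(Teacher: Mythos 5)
Your proposal is correct and follows essentially the same route as the paper: reduce \eqref{5.315} via H\"older's inequality and the linear growth of $b$ together with \eqref{5.302} to the kernel estimate $\int_0^t ds\int_D dy\,|\Gamma(t,x_1;s,y)-\Gamma(t,x_2;s,y)|\le C|x_1-x_2|$, and prove the latter by writing the increment as $\int_0^1\varphi'(\lambda)\,d\lambda$ with $\varphi(\lambda)=\Gamma(t,x_1+\lambda(x_2-x_1);s,y)$, invoking the gradient Gaussian bound \eqref{gauss}, and integrating out $y$ by Fubini before the time integral to obtain the integrable singularity $(t-s)^{-1/2}$. Your added remarks on the order of integration and on the case where $D$ has a boundary are sensible but the argument is the paper's.
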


\begin{proof}
By H\"older's inequality, and using the linear growth of $b$ and \eqref{5.302}, the proof of \eqref{5.315} reduces to checking that
\beq
\label{5.316}
\int_0^t ds  \int_{D} dy \left\vert \Gamma(t,x_1;s,y) - \Gamma(t,x_2;s,y)\right\vert
\le C |x_1-x_2|.
\eeq
Let $\varphi(\lambda) = \Gamma(t,x_1+\lambda(x_2-x_1); s,y)$. We have
\begin{align*}
&\int_0^t ds  \int_{D} dy \left\vert \Gamma(t,x_1;s,y) - \Gamma(t,x_2;s,y)\right\vert
=\int_0^t ds  \int_{D} dy \left\vert \int_0^1 d\lambda\,  \varphi^{\prime}(\lambda)\right\vert\\
& \quad \le C |x_1-x_2| \int_0^t ds   \int_{D} dy \int_0^1 d\lambda\,  (t-s)^{-\frac{k+1}{2}}\\
&\qquad\qquad\qquad\qquad\qquad\times \exp\left(-c\, \frac{|x_1+\lambda(x_2-x_1)-y|^2}{t-s}\right)\\
& \quad \le C |x_1-x_2| \int_0^t ds\,  (t-s)^{-\frac{1}{2}}\le C   |x_1-x_2| .
\end{align*}
The first inequality above is obtained by applying \eqref{gauss}.  For the second inequality, we use  Fubini's theorem and integrate the Gaussian density.
\end{proof}% and  in the following way:
%\begin{align*}
%&\int_0^t ds   \int_{D} dy \int_0^1 d\lambda\  (t-s)^{-\frac{k+1}{2}}\\
%&\qquad\times \exp\left(-c\frac{|x_1+\lambda(x_2-x_1)-y|^2}{t-s}\right)\\
%&\quad = \int_0^t ds  (t-s)^{-\frac{1}{2}} \int_0^1 d\lambda\,  \int_{D} dy (t-s)^{-\frac{k}{2}} \exp\left(-c\frac{|x_1+\lambda(x_2-x_1)-y|%^2}{t-s}\right)\\
%&\quad \le C \int_0^t ds  (t-s)^{-\frac{1}{2}},
%\end{align*}
%where the last estimate follows from \eqref{boundgauss}.
%%%%%old proof
%We apply the mean value theorem, along with the inequality \eqref{gauss} to obtain
%\begin{align}
%\label{5.316}
%&E\left(\left\vert \int_0^t ds  \int_{D} dy [\Gamma(t,x_1;s,y) - \Gamma(t,x_2;s,y)] b(s, y,u(s,y))\right\vert^p\right)\notag\\
%&\qquad \le C |x_1-x_2|^p\\
% &\qquad \quad\times E\left( \int_0^t ds  \int_{D} dy (t-s)^{-\frac{k+1}{2}} \exp\left[ - c\frac{|x^{\ast}-y|^2}{t-s} \right] (1+|u(y,s)|)\right)^p.
%\end{align}
%where $x^\ast$ lies on the segment connecting $x_1$, $x_2$, and we have applied the property of linear growth of $b$.

%The measure $(t-s)^{-\frac{k+1}{2}} \exp\left[ - c\frac{|x^{\ast}-y|^2}{t-s} \right] ds dy$ is finite on $[0,T]\times D$. Hence, by using
 %H\"older's inequality, the last expression in \eqref{5.316} is bounded from above by
%\beqn
%C E\left(1+\int_0^t ds  \int_{D} dy (t-s)^{-\frac{k+1}{2}} \exp\left[ - c\frac{|x^{\ast}-y|^2}{t-s} \right] E(|u(y,s)|^p))\right) |x_1-x_2|^p.
%\eeqn
%From here, and because of \eqref{5.302}, we obtain \eqref{5.315}.

For the study of the time increments of the pathwise integral, we will make use of the next lemma.

\begin{lemma}
\label{ch6-l1}
Suppose that assumptions $({\bf \bar{H}}_\Gamma)(1)$-$(2)$ hold. Then: 
\begin{description}
\item{(a)} For any $\delta\in\, ]0,1[$, there is a constant $C_\delta < \infty$ such that, for  $0<t\le t+h < \infty$ and $x \in \rek$, 
\beq
\label{6.1}
\int_0^t ds\ (t-s)^{\delta-1} \int_D dy\ |\Gamma(t+h,x;s,y) - \Gamma(t,x;s,y)| \le C_\delta\, h^\delta.
\eeq
 \item{(b)} There is $C < \infty$ such that, for any $\alpha \in\, ]0,1[$, there is $c_\alpha < \infty$ such that $\lim_{\alpha \uparrow 1} c_\alpha = 1$ and for any $0<t\leq t+ h < \infty$ and $x \in \rek$,
\beq
\label{6.1a}
	  \int_0^t ds \int_D dy\ |\Gamma(t+h,x;s,y) - \Gamma(t,x;s,y)| \le C\, c_\alpha\, \frac{t^{1-\alpha}}{1-\alpha}\, h^\alpha.
\eeq
\end{description}
\end{lemma}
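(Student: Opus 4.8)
The plan is to estimate the difference $\Gamma(t+h,x;s,y)-\Gamma(t,x;s,y)$ by writing it as an integral of the time derivative, using the Gaussian bounds \eqref{gauss} with $|\mu|=0$, $\nu=1$, and then integrating out the spatial and temporal variables. For part (a), I would write
$$
\Gamma(t+h,x;s,y)-\Gamma(t,x;s,y)=\int_t^{t+h} dr\ \partial_r \Gamma(r,x;s,y),
$$
so that by \eqref{gauss},
$$
|\Gamma(t+h,x;s,y)-\Gamma(t,x;s,y)|\le C\int_t^{t+h} dr\ (r-s)^{-\frac{k+2}{2}}\exp\!\left(-c\,\frac{|x-y|^2}{r-s}\right).
$$
Integrating in $y\in D\subset\rek$ and using that $\int_{\rek}(r-s)^{-k/2}\exp(-c|x-y|^2/(r-s))\,dy\le C$ (uniformly in $r-s$, by \eqref{boundgauss}) leaves
$$
\int_D dy\ |\Gamma(t+h,x;s,y)-\Gamma(t,x;s,y)|\le C\int_t^{t+h} dr\ (r-s)^{-1}.
$$
Then I multiply by $(t-s)^{\delta-1}$ and integrate $s$ over $[0,t]$; after swapping the order of integration (Fubini, everything nonnegative) this becomes
$$
C\int_0^{t+h} dr\ (r-s)^{-1}\Big(\text{suitably truncated}\Big)\cdots
$$
but more cleanly one bounds $(t-s)^{\delta-1}\le (r-s)^{\delta-1}$ for $r\ge t$ under the integral, so the double integral is at most $C\int_0^t ds\int_t^{t+h}dr\,(r-s)^{\delta-1}(r-s)^{-1}\cdot\big((t-s)/(r-s)\big)^{1-\delta}$; the honest route is to split: for $s$ away from $t$ the factor $(r-s)^{-1}$ is bounded and we get $\int (t-s)^{\delta-1}ds\cdot h$, while near $s=t$ a direct computation of $\int_0^t(t-s)^{\delta-1}\int_t^{t+h}(r-s)^{-1}dr\,ds$ yields the bound $C_\delta h^\delta$ via the change of variables $s\mapsto (t-s)/h$ and using $\delta\in\,]0,1[$ for integrability at both endpoints.

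For part (b), the difference is that the weight $(t-s)^{\delta-1}$ is replaced by $1$, so after the same first two steps I am left to bound
$$
\int_0^t ds\int_t^{t+h} dr\ (r-s)^{-1}=\int_0^t ds\ \log\!\frac{t+h-s}{t-s}.
$$
I would bound $\log\frac{t+h-s}{t-s}=\log\big(1+\frac{h}{t-s}\big)\le c_\alpha\big(\frac{h}{t-s}\big)^{\alpha}$ for any $\alpha\in\,]0,1[$, with $c_\alpha=\sup_{u>0}u^{-\alpha}\log(1+u)<\infty$ and $c_\alpha\to 1$ as $\alpha\uparrow 1$. Then
$$
\int_0^t ds\ \log\!\frac{t+h-s}{t-s}\le c_\alpha\,h^\alpha\int_0^t (t-s)^{-\alpha}\,ds=c_\alpha\,h^\alpha\,\frac{t^{1-\alpha}}{1-\alpha},
$$
which is exactly \eqref{6.1a} (with the constant $C$ absorbing the uniform bound from the $y$-integration).

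The main technical point to be careful with is the interchange of the $r$- and $s$-integrations and the truncation near the singularity $s=t$: the pointwise bound $(r-s)^{-\frac{k+2}{2}}\exp(-c|x-y|^2/(r-s))$ has a non-integrable prefactor in $r$ by itself, and it is only after integrating in $y$ (which converts the $(r-s)^{-k/2}$ part of the prefactor into a constant) that the remaining $(r-s)^{-1}$ is integrable against $(r-s)^{\delta-1}ds$ or against $ds$ on the relevant region. I expect the estimate $\int_0^t(t-s)^{\delta-1}\int_t^{t+h}(r-s)^{-1}dr\,ds\le C_\delta h^\delta$ — proved by scaling $s=t-h\sigma$ and checking that the resulting integral $\int_0^{t/h}\sigma^{\delta-1}\log\frac{1+\sigma}{\sigma}\,d\sigma$ is finite and bounded uniformly (it converges at $\sigma=0$ since $\sigma^{\delta-1}\log(1/\sigma)$ is integrable, and at $\infty$ since $\log\frac{1+\sigma}{\sigma}\sim\sigma^{-1}$) — to be the one requiring the most care; everything else is a routine application of \eqref{gauss} and \eqref{boundgauss}.
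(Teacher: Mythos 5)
Your proposal is correct and follows essentially the same route as the paper: differentiate in time, apply the Gaussian bound \eqref{gauss} with $\nu=1$, integrate out $y$ to reduce to $\int_0^t (t-s)^{\delta-1}\log\bigl(1+\tfrac{h}{t-s}\bigr)\,ds$, and conclude by the scaling $s=t-h\sigma$ for (a) and by $\log(1+u)\le c_\alpha u^\alpha$ for (b). The only blemish is the muddled middle paragraph of (a) (the pointwise comparison $(t-s)^{\delta-1}\le(r-s)^{\delta-1}$ is stated backwards), but your final paragraph supersedes it with the correct clean computation, which is exactly the paper's.
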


\begin{proof}
(a) Clearly,
\begin{align*}
&\int_0^t ds\, (t-s)^{\delta-1} \int_D dy\ |\Gamma(t+h,x;s,y) - \Gamma(t,x;s,y)|\\
&\qquad = \int_0^t ds\, (t-s)^{\delta-1} \int_D dy\ \left\vert \int_t^{t+h} d\tau\, \partial_{\tau} \Gamma(\tau,x;s,y)\right\vert.
\end{align*}
Apply \eqref{gauss} to see that the last expression is bounded by
\begin{align*}
& C \int_0^t ds\, (t-s)^{\delta-1} \int_D dy \int_t^{t+h} d\tau\, (\tau - s)^{-\frac{k+2}{2}} \exp\left(-c\,\frac{|x-y|^2}{\tau - s}\right)\\
&\qquad = C \int_0^t ds\ (t-s)^{\delta-1} \int_t^{t+h} d\tau\, (\tau - s)^{-1},
\end{align*}
where we have applied Fubini's theorem and integrated the Gaussian density.

The last integral can be bounded as follows. First, we integrate with respect to $\tau$ to obtain
\begin{align}\notag
&\int_0^t ds\, (t-s)^{\delta-1} \int_t^{t+h} d\tau\, (\tau - s)^{-1} \\ \notag
   &\qquad = \int_0^t ds\, (t-s)^{\delta-1}\log \left(1+\frac{h}{t-s}\right)%\\
 = \int_0^t ds\, s^{\delta-1}\log \left(1+\frac{h}{s}\right)\\ \notag
&\qquad  = h^\delta \int_0^{\frac{t}{h}} dr\,  r^{\delta-1}\log \left(1+\frac{1}{r}\right)\\ &\qquad  \le C_\delta\, h^\delta,
\label{6.1b}
\end{align}
where the last equality is obtained by the change of variables $s=rh$, and
\beqn
   C_\delta = \int_0^{+\infty} dr\, r^{\delta - 1} \log \left(1+\frac{1}{r}\right).
\eeqn
Notice that $C_\delta < \infty$ for $\delta \in \, ]0,1[$ (but $\lim_{\delta \uparrow 1} C_\delta = + \infty$ because of the behavior of the integrand at $+\infty$).

   (b) Using the same calculations as in part (a) up to \eqref{6.1b}, but with $\delta = 1$, we see that the left-hand side of \eqref{6.1a} is bounded above by
\beqn
   C \int_0^t ds\ \log \left(1+\frac{h}{s}\right).
\eeqn
%\ref{?} PROBABLY get a sharper result by doing the change of variable $r = s/h$ and then integrating explicitly??
Use the inequality $\log(1+x) \le c_\alpha\, x^\alpha$, valid for all $x \ge 0$ and $\alpha \in\, ]0,1]$ (with $\lim_{\alpha \uparrow 1} c_\alpha = 1$), to see that for $0 < \alpha < 1$, this is bounded above by
\beqn
   C\, c_\alpha \, h^\alpha \int_0^t ds\, s^{-\alpha} = C\, c_\alpha \, \frac{t^{1-\alpha}}{1-\alpha} \, h^\alpha .
\eeqn
This proves the lemma.
\end{proof}

\begin{lemma}
\label{ch5-p31}
(Increments in time) Assume $({\bf H_\Lambda})$, $({\bf \bar{H}_\Gamma})(1)$-$(2)$, \eqref{dalang-cond}, $({\bf H_L})$ and $({\bf H_I})$.
Let $(u(t,x), \txind)$ be as defined in \eqref{5.301} and
fix $\gamma \in \, ]0,1[$.
Then for all $p\in [1,\infty[$, there is a constant $C_{p,\gamma, T} < \infty$ such that, for all $t_1,t_2 \in [0,T]$ and $x \in D$, 
\begin{align}
\label{5.317}
&\sup_{x\in D}E\left(\left\vert \int_0^{t_1} ds  \int_{D} dy\, \Gamma(t_1,x;s,y)b(s, y,u(s,y))\right.\right.\notag\\
&\left.\left. \qquad\qquad  -\int_0^{t_2} ds  \int_{D} dy\, \Gamma(t_2,x;s,y)b(s, y,u(s,y))\right\vert^p\right)\notag\\
%&\qquad\le C |t_1-t_2|^{\gamma p},
&\qquad\le C_{p,\gamma, T}\, |t_1-t_2|^{\gamma p}.
\end{align}
%for every $\gamma\in ]0,1[$.
\end{lemma}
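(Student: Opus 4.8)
\textbf{Proof plan for Lemma \ref{ch5-p31} (time increments of the pathwise integral).}

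The plan is to reduce the $L^p$-estimate to a purely deterministic bound on $\Gamma$, exactly in the spirit of the proof of Lemma \ref{ch5-p30}. Assume without loss of generality that $t_2 = t_1 + h$ with $h > 0$, write $t = t_1$, and split the difference of the two pathwise integrals as
\beqn
\int_0^{t} ds  \int_{D} dy\, [\Gamma(t+h,x;s,y) - \Gamma(t,x;s,y)]\, b(s, y,u(s,y))
 + \int_{t}^{t+h} ds  \int_{D} dy\, \Gamma(t+h,x;s,y)\, b(s, y,u(s,y)).
\eeqn
For each of the two terms I would apply H\"older's inequality in the measure $ds\, dy$ (using the total finiteness of this measure against $|\Gamma|$, which follows from $({\bf \bar H_\Gamma})$ via \eqref{gauss-simple} and \eqref{boundgauss}), then pull out $\sup_{(s,y)} E(|b(s,y,u(s,y))|^p)$, which is finite by the linear growth of $b$ in $({\bf H_L})$ together with \eqref{5.302}. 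This is the same mechanism as in the proof of \eqref{5.315}. What remains is then to bound the two deterministic quantities
\beqn
\int_0^{t} ds  \int_{D} dy\, |\Gamma(t+h,x;s,y) - \Gamma(t,x;s,y)|
\quad\text{and}\quad
\int_{t}^{t+h} ds  \int_{D} dy\, |\Gamma(t+h,x;s,y)|
\eeqn
by a constant times $h^\gamma$ (uniformly in $x \in D$ and $t \in [0,T]$).

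The second term is the easy one: by \eqref{gauss-simple} and integration of the Gaussian density in $y$, it is bounded by $C \int_t^{t+h} (t+h-s)^{-k/2}\int_D dy\, (\cdots) \le C\int_0^h du = Ch \le C_{\gamma,T}\, h^\gamma$ for $\gamma \in\,]0,1[$ (using $h \le T$). The first term is handled by Lemma \ref{ch6-l1}(b): it gives the bound $C\, c_\alpha\, \frac{t^{1-\alpha}}{1-\alpha}\, h^\alpha \le C\, c_\gamma\, \frac{T^{1-\gamma}}{1-\gamma}\, h^\gamma$ upon choosing $\alpha = \gamma$, which is exactly of the required form with a constant $C_{p,\gamma,T}$ depending on $p$ (through the H\"older/linear-growth step), $\gamma$ and $T$ but not on $t$ or $x$. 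Combining the two pieces and raising to the $p$-th power (using $(a+b)^p \le 2^{p-1}(a^p+b^p)$) yields \eqref{5.317}.

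The only mildly delicate point — and the one I would flag as the ``obstacle'' — is the uniformity in $x \in D$ and the legitimacy of the estimates when $D$ has a boundary: Lemma \ref{ch6-l1} is stated for $x \in \rek$ using the full Gaussian bound \eqref{gauss}, so one must invoke the fact that under $({\bf \bar H_\Gamma})$ the Green's function $\Gamma$ on $D$ still satisfies \eqref{gauss} (the Gaussian upper bounds hold for uniformly parabolic operators on domains with smooth boundary, as recalled after $({\bf H_\Gamma})$), and that integrating over $D \subset \rek$ only decreases the integrals relative to integrating over $\rek$. Once this is granted, every step above is routine, and the proof is essentially a transcription of the argument for Lemma \ref{ch5-p30} with the time-increment bound of Lemma \ref{ch6-l1}(b) in place of the spatial-gradient bound.
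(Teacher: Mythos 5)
Your proposal is correct and follows essentially the same route as the paper: the same decomposition into the $[t,t+h]$ piece and the $[0,t]$ piece with the increment $\Gamma(t+h,\cdot)-\Gamma(t,\cdot)$, the same H\"older step against the finite measure $|\Gamma|\,ds\,dy$ combined with the linear growth of $b$ and \eqref{5.302}, and the same use of Lemma \ref{ch6-l1}(b) (with Gaussian integration handling the short-time piece).
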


\begin{proof}
To ease the notation, we write $t_1 = t+h$, $t_2 = t$, where $t\in[0,T]$ and $h>0$ is such that $t+h\le T$. Notice that
\begin{align*}
  & E\left(\left\vert \int_0^{t+h} ds  \int_{D} dy \left[ \Gamma(t+h,x;s,y)- \Gamma(t,x;s,y) \right] b(s, y,u(s,y))\right\vert^p\right) \\
  &= A_1 + A_2,
\end{align*}
where
\begin{align*}
   A_1 &= E\left(\left\vert \int_t^{t+h} ds  \int_{D} dy \, \Gamma(t+h,x;s,y)b(s, y,u(s,y))\right\vert^p\right), \\
   A_2 &= E\left(\left\vert \int_0^{t} ds  \int_{D} dy \left[ \Gamma(t+h,x;s,y)- \Gamma(t,x;s,y) \right] b(s, y,u(s,y))\right\vert^p\right).
\end{align*}
First, we consider
the term $A_1$,
%\beqn
%E\left(\left\vert \int_t^{t+h} ds  \int_{D} dy \, \Gamma(t+h,x;s,y)b(s, y,u(s,y))\right\vert^p\right),
%\eeqn
to which we apply H\"older's inequality with respect to the finite measure $ \Gamma(t+h,x;s,y) ds dy$. By the properties of $b$ and  \eqref{5.302}, we obtain
\begin{align}
\label{5.318}
&E\left(\left\vert \int_t^{t+h} ds  \int_{D} dy\,  \Gamma(t+h,x;s,y)b(s, y,u(s,y))\right\vert^p\right)\notag\\
&\qquad \le C_p\left(1+\sup_{\txind}E(|u(t,x)|^p)\right) \left(\int_t^{t+h} ds  \int_{D} dy\,  \Gamma(t+h,x;s,y)\right)^p\notag\\
&\qquad = \tilde C_p\, h^p,
\end{align}
for all in $\txind$.

Next, we consider the term $A_2$.
%\beqn
%E\left(\left\vert \int_0^{t} ds  \int_{D} dy \left[ \Gamma(t+h,x;s,y)- \Gamma(t,x;s,y) \right] b(s, y,u(s,y))\right\vert^p\right).
%\eeqn
As before, we use H\"older's inequality and the linear growth of $b$ to see that it is bounded from above by
\begin{align}
\label{5.319}
&C_p\left(1+\sup_{\txind}E(|u(t,x)|^p)\right)\notag\\
&\qquad \times \left(\int_0^t ds \int_{D} dy\, |\Gamma(t+h,x;s,y)- \Gamma(t,x;s,y)|\right)^p\notag\\
&\quad \le C_{p,\gamma, T}\, h^{\gamma p},
\end{align}
where the last inequality follows from \eqref{5.302} and Lemma \ref{ch6-l1}(b).

%In order to deal with the singularity of $\Gamma$ in the time variable on the diagonal, we study the second factor in \eqref{5.319} in the following way. Let $\gamma\in \left(\frac{k}{k+2}, 1\right)$. Using the mean-value theorem, \eqref{boundgauss} and \eqref{gauss},
%we have the following estimates
%\begin{align}
%\label{5.320}
%&\int_0^t ds \int_{D} dy |\Gamma(t+h,x;s,y)- \Gamma(t,x;s,y)|\notag\\
%&\qquad \le C  h^\gamma\left(\int_0^t ds \int_{D} dy \left(|\Gamma(t+h,x;s,y)|+|\Gamma(t,x;s,y)|\right)^{1-\gamma} |\partial_t\Gamma(t^*,x;s,y)|^\gamma\right)\notag\\
%&\qquad \le C h^\gamma\left(\int_0^t ds (t-s)^{-\frac{k}{2}(1-\gamma)} (t^*-s)^{-\frac{k+2}{2}\gamma + \frac{k}{2}}\right)\notag\\
%&\qquad \qquad\times\int_D dy (t^*-s)^{-\frac{k}{2}} \exp\left[-c\frac{|x-y|^2}{t^*-s}\right]\notag\\
%&\qquad \le C h^\gamma\left(\int_0^t ds (t-s)^{-\frac{k}{2}(1-\gamma)} (t^*-s)^{-\frac{k+2}{2}\gamma + \frac{k}{2}}\right)\notag\\
%&\qquad \le C h^\gamma\left(\int_0^t ds (t-s)^{-\gamma} \right)\le C h^\gamma.
%\end{align}
%Here $t^\ast$ de notes a point lying on the segment joining $t$ and $t+h$.  In the last inequality we have used that the exponent $-\frac{k+2}{2}\gamma + \frac{k}{2}$ is negative, by the choice of $\gamma$, and therefore
%$ (t^*-s)^{-\frac{k+2}{2}\gamma + \frac{k}{2}}\le (t-s)^{-\frac{k+2}{2}\gamma + \frac{k}{2}}$, because $t^*\in (t,t+h)$.
%%%%%%%%%%%%
From \eqref{5.318} and \eqref{5.319}, we obtain \eqref{5.317}.
\end{proof}

%Set
%\beqn
% \cJ(t,x) = \int_0^t ds  \int_{D} dy\  \Gamma(t,x;s,y)\, b(s,y,u(s,y)), \quad \txind.
%\eeqn
Lemmas \ref{ch5-p30} and \ref{ch5-p31} yield the following.
%\medskip

\begin{prop}
\label{prop4.9rd}
Assume $({\bf H_\Lambda})$, $({\bf \bar{H}_\Gamma})(1)$-$(2)$, \eqref{dalang-cond}, $({\bf H_L})$ and $({\bf H_I})$. Let the process $(\cJ(t,x))$ be as defined in \eqref{j-pi}.
For any $p\in [1,\infty[$ and $\gamma \in \, ]0,1[$, for any $(t,x), (s,y)\in [0,T]\times D$,
\beq
\label{5.321}
E(|\cJ(t,x) - \cJ(s,y)|^p) \le C (|t-s|^{\gamma p} + |x-y|^{p}).
\eeq
\end{prop}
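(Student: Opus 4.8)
The plan is to reduce the joint increment of the pathwise integral to a purely spatial increment plus a purely temporal increment by inserting an intermediate point, and then to quote Lemmas \ref{ch5-p30} and \ref{ch5-p31}. Concretely, for $(t,x),(s,y)\in[0,T]\times D$, I would write
\[
\cJ(t,x) - \cJ(s,y) = \bigl(\cJ(t,x) - \cJ(t,y)\bigr) + \bigl(\cJ(t,y) - \cJ(s,y)\bigr),
\]
observing that the intermediate point $(t,y)$ again lies in $[0,T]\times D$ (since $t\in[0,T]$ and $y\in D$). The elementary inequality $|a+b|^p\le 2^{p-1}(|a|^p+|b|^p)$ then gives
\[
E\bigl(|\cJ(t,x) - \cJ(s,y)|^p\bigr) \le 2^{p-1}\Bigl( E\bigl(|\cJ(t,x) - \cJ(t,y)|^p\bigr) + E\bigl(|\cJ(t,y) - \cJ(s,y)|^p\bigr)\Bigr).
\]

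For the first term, note that $\cJ(t,x)-\cJ(t,y)$ equals the quantity $\int_0^t ds\int_D dz\,[\Gamma(t,x;s,z)-\Gamma(t,y;s,z)]\,b(s,z,u(s,z))$ controlled in Lemma \ref{ch5-p30}, so it is bounded by $C_p\,|x-y|^p$, uniformly over $t\in[0,T]$. For the second term, $\cJ(t,y)-\cJ(s,y)$ is exactly the difference of pathwise integrals controlled in Lemma \ref{ch5-p31}, so for the given $\gamma\in\,]0,1[$ it is bounded by $C_{p,\gamma,T}\,|t-s|^{\gamma p}$, uniformly over $y\in D$. Adding the two bounds yields \eqref{5.321} with, say, $C = 2^{p-1}\max(C_p, C_{p,\gamma,T})$.

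I do not expect any real obstacle here: the analytic content of the proposition is entirely contained in Lemmas \ref{ch5-p30} and \ref{ch5-p31}, and the present statement is just the bookkeeping that combines them. The only points to verify are that the hypotheses assumed here — $({\bf H_\Lambda})$, $({\bf \bar H_\Gamma})(1)$–$(2)$, \eqref{dalang-cond}, $({\bf H_L})$ and $({\bf H_I})$ — are precisely those required by both lemmas (they are), and that the field $u$ in both lemmas is the same random field solution of Theorem \ref{ch5-t2} appearing in the proposition (it is). Note also that the possible unboundedness of $D$ causes no difficulty, since the chosen intermediate point stays inside $[0,T]\times D$.
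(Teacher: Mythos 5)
Your proposal is correct and is exactly the argument the paper intends: the paper simply states that Lemmas \ref{ch5-p30} and \ref{ch5-p31} yield the proposition, and your decomposition through the intermediate point $(t,y)$ together with $|a+b|^p\le 2^{p-1}(|a|^p+|b|^p)$ is the standard bookkeeping that makes this precise.
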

\vskip 12pt

\noindent{\em Increments of the stochastic integral in \eqref{5.301}}
\medskip

\begin{lemma}
\label{ch5-p32}
(Spatial increments)
The hypotheses are as in Theorem \ref{ch5-t30}, and $(u(t,x),\, \txind)$ is defined in \eqref{5.301}.
Then for every  $\gamma\in\  ]0,1-\eta[$ and all $p\in [2,\infty[$, there is $C>0$ such that for all $x_1, x_2\in D$,  
\begin{align}
\label{5.322}
&\sup_{t\in[0,T]} E\left(\left\vert \int_0^t   \int_{D}  [\Gamma(t,x_1;s,y) - \Gamma(t,x_2;s,y)]\, \sigma(s, y,u(s,y))\, W(ds,dy)\right\vert^p\right)\notag\\
 &\qquad \qquad\le C |x_1-x_2|^{\gamma p}.
\end{align}
\end{lemma}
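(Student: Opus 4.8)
The plan is to reduce the estimate of the $L^p(\Omega)$-norm of the spatial increment of the stochastic integral to a deterministic estimate on the $U$-norm of the increment of the kernel, and then to exploit condition \eqref{5.311} together with the Gaussian bounds \eqref{gauss} on $\Gamma$. First I would apply Burkholder's inequality, exactly as in the derivation of \eqref{3-11} in the proof of Theorem \ref{ch5-t2}, to bound the left-hand side of \eqref{5.322} by
\beqn
C_p\, E\left(\left(\int_0^t ds\, \big\Vert [\Gamma(t,x_1;s,\ast) - \Gamma(t,x_2;s,\ast)]\sigma(s,\ast,u(s,\ast)) 1_D(\ast)\big\Vert_U^2\right)^{p/2}\right).
\eeqn
Then, using Proposition \ref{ch2-incluU} and Lemma \ref{l-1} to rewrite the $U$-norm via the spectral measure, followed by H\"older's inequality in $s$ (with the appropriate power of the integrable singularity removed) and the linear growth of $\sigma$ together with \eqref{5.302}, the problem reduces to showing
\beq
\label{plan-key}
\int_0^t ds \int_{\rek} \mu(d\xi)\, \big\vert \tf\big(\Gamma(t,x_1;s,\ast) - \Gamma(t,x_2;s,\ast)\big)(\xi)\big\vert^2 \le C\, |x_1-x_2|^{2\gamma}
\eeq
for $\gamma \in \,]0, 1-\eta[$, uniformly in $t\in[0,T]$ and $x_1,x_2\in D$.

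To prove \eqref{plan-key}, the natural route (in the spirit of the computations after Proposition \ref{ch5-p30} in the proof of the factorization formula) is to dominate $|\Gamma(t,x;s,y)|$ by $S(t-s,x-y)$ from \eqref{relevant}, but the subtraction must be handled carefully since we cannot simply bound $|\Gamma(t,x_1;s,y)-\Gamma(t,x_2;s,y)|$ by $S(t-s,x_1-y)+S(t-s,x_2-y)$ without losing the increment. The key is that $\tf S(t-s)(\xi) = \exp(-c(t-s)|\xi|^2)$ is an \emph{explicit} function, so in Fourier variables the increment of the translated Gaussian is $\exp(-c(t-s)|\xi|^2)\,|e^{-i\xi x_1} - e^{-i\xi x_2}|$, and $|e^{-i\xi x_1} - e^{-i\xi x_2}| \le 2(|\xi|\,|x_1-x_2|)\wedge 2 \le 2(|\xi|\,|x_1-x_2|)^\theta$ for any $\theta\in[0,1]$. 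Alternatively, I would use the gradient bound in \eqref{gauss} with $|\mu|=1$ to write $\Gamma(t,x_1;s,y)-\Gamma(t,x_2;s,y) = \int_0^1 \nabla_x\Gamma(t, x_1 + \lambda(x_2-x_1);s,y)\cdot(x_2-x_1)\, d\lambda$, interpolating this pointwise bound (of order $|x_1-x_2|$, with a stronger time singularity $(t-s)^{-(k+1)/2}$) against the plain bound $|\Gamma(t,x_1;s,y)| + |\Gamma(t,x_2;s,y)| \le C S(t-s,\cdot)$ (of order $1$). Either way, after bounding via $S$ and passing to the spectral side using \eqref{idcov} (valid since $S\ge 0$ and $f\ge 0$), and performing the change of variables $r = cs|\xi|^2$ as in the proof of Proposition \ref{ch5-p30}, the integral in \eqref{plan-key} is controlled by
\beqn
C\,|x_1-x_2|^{2\gamma} \int_{\rek} \frac{\mu(d\xi)}{(1+|\xi|^2)^{1-\gamma}},
\eeqn
which is finite precisely when $1-\gamma > \eta$, i.e. $\gamma < 1-\eta$, by \eqref{5.311}.

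The main obstacle is \eqref{plan-key}: one must squeeze the H\"older exponent $\gamma$ up to (but not including) $1-\eta$, which forces a careful interpolation between the ``increment'' estimate and the ``no increment'' estimate, tracking how much time-singularity one can afford to integrate against $\mu$ via \eqref{5.311}. Concretely, if one uses the gradient representation one gets a term $|x_1-x_2|^{2\theta}\int_0^t (t-s)^{-\theta}\cdots$ after interpolating the pointwise bounds with weights $\theta$ and $1-\theta$, and the time integral combined with the substitution produces $\int \mu(d\xi)/(1+|\xi|^2)^{1-\theta}$; choosing $\theta = \gamma$ then finishes, but one must check the endpoint behaviour and the uniformity in $t$ and in the domain $D$ (the latter coming for free since $D\subset\rek$ and $S\in L^1(\rek)$). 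Once \eqref{plan-key} is established, plugging back through Burkholder and H\"older gives \eqref{5.322} with the constant $C$ depending on $p$, $\gamma$, $T$ and $\sup_{(s,y)}E(|u(s,y)|^p)$, the last of which is finite by \eqref{5.302}.
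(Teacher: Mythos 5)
Your argument is essentially correct, but it takes a genuinely different route from the paper. The paper does not apply Burkholder's inequality to the increment at all: it invokes the factorization formula \eqref{5.314} with a parameter $\delta\in\,\left]\frac{\gamma}{2},\frac{1-\eta}{2}\right[$, which converts the stochastic integral into the pathwise integral
$\frac{\sin(\pi\delta)}{\pi}\int_0^t ds\,(t-s)^{\delta-1}\int_D dy\,[\Gamma(t,x_1;s,y)-\Gamma(t,x_2;s,y)]\,Y_\delta(s,y)$;
after H\"older's inequality and the uniform $L^p$ bound \eqref{5.3130} on $Y_\delta$, everything reduces to the purely deterministic estimate
$\int_0^t ds\,(t-s)^{\delta-1}\int_D dy\,|\Gamma(t,x_1;s,y)-\Gamma(t,x_2;s,y)|\le C|x_1-x_2|^{\gamma}$,
obtained by raising the inner integral to the powers $1-\gamma$ (controlled by \eqref{boundgauss}) and $\gamma$ (controlled by the gradient Gaussian bound in \eqref{gauss}, which costs $(t-s)^{-\gamma/2}$). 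In that scheme the noise regularity \eqref{5.311} enters only through the admissible range of $\delta$, and the spectral measure never appears in the increment estimate. Your scheme keeps Burkholder and pushes the increment through the $U$-norm, so $\mu$ appears directly via $\int_{\rek}\mu(d\xi)(1+|\xi|^2)^{\gamma-1}$; this is more elementary (it needs neither the semigroup property \eqref{convol} nor the stochastic Fubini theorem) and yields the same range $\gamma<1-\eta$, at the price of transferring all the difficulty to your key deterministic estimate on the weighted spectral integral of the kernel increment.

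One caveat: of the two variants you offer for that key estimate, only the second (gradient representation plus pointwise interpolation) is viable here. The first rests on the explicit Fourier transform of the \emph{translated Gaussian} $S$, but $\Gamma$ is not a convolution kernel (the coefficients of $\cL$ are non-constant), so the Fourier transform of $\Gamma(t,x_1;s,\ast)-\Gamma(t,x_2;s,\ast)$ is not $e^{-c(t-s)|\xi|^2}(e^{-i\xi x_1}-e^{-i\xi x_2})$, and one cannot dominate the spectral integral of the increment of $\Gamma$ by that of the increment of $S$ --- this is exactly the \emph{losing the increment} problem you flag. With the gradient route the argument does close: the pointwise interpolation gives
$|\Gamma(t,x_1;s,y)-\Gamma(t,x_2;s,y)|\le C|x_1-x_2|^{\gamma}(t-s)^{-\gamma/2}$ times a sum of kernels of the form \eqref{relevant} centered at $x_1$ and $x_2$ (with the constant $c$ in the exponent degraded to $c(1-\gamma)$ by the raising to powers $\gamma$ and $1-\gamma$), after which \eqref{idcov}, Lemma \ref{l-1} and Cauchy--Schwarz on the cross terms, followed by the change of variables $r=cs|\xi|^2$ as in the proof of the factorization proposition, give the bound $C|x_1-x_2|^{2\gamma}\int_{\rek}\mu(d\xi)(1+|\xi|^2)^{\gamma-1}$, finite by \eqref{5.311} since $\gamma<1-\eta$.
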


\begin{proof}
Fix $\gamma\in\, ]0,1-\eta[$ and $\delta\in \left] \frac{\gamma}{2},\frac{1-\eta}{2}\right[$. We apply the identity \eqref{5.314}, H\"older's inequality and then \eqref{5.3130} to obtain
\begin{align}
\label{5.323}
&E\left(\left\vert \int_0^t   \int_{D}  [\Gamma(t,x_1;s,y) - \Gamma(t,x_2;s,y)]\, \sigma(s, y,u(s,y))\, W(ds,dy)\right\vert^p\right)\notag\\
&\qquad \le C \left(\int_0^t  ds (t-s)^{\delta-1} \int_D dy\,  \vert \Gamma(t,x_1;s,y) - \Gamma(t,x_2;s,y)\vert\right)^p.
\end{align}
Clearly,
\begin{align*}
&\int_0^t ds\, (t-s)^{\delta-1} \int_D dy\, \vert \Gamma(t,x_1;s,y) - \Gamma(t,x_2;s,y)\vert\\
&\quad =  \int_0^t ds\,  (t-s)^{\delta-1}  \left(\int_D dy \, |\Gamma(t,x_1;s,y) -  \Gamma(t,x_2;s,y)|\right)^{1-\gamma}\\
& \qquad\qquad \times\left(\int_D dy \, |\Gamma(t,x_1;s,y) -  \Gamma(t,x_2;s,y)|\right)^\gamma.
\end{align*}
By \eqref{boundgauss},
\begin{align*}
& \left(\int_D dy \,|\Gamma(t,x_1;s,y) - \Gamma(t,x_2;s,y)|\right)^{1-\gamma}\\
&\quad \le  \left(\int_D dy\,|\Gamma(t,x_1;s,y)| + \int_D dy\, |\Gamma(t,x_2;s,y)|\right)^{1-\gamma} < \infty.
\end{align*}
Hence,
\begin{align*}
&\int_0^t  (t-s)^{\delta-1} \int_D dy\, \vert \Gamma(t,x_1;s,y) - \Gamma(t,x_2;s,y)\vert\\
&\quad \le C \int_0^t ds\,  (t-s)^{\delta-1}  \left(\int_D dy\, |\Gamma(t,x_1;s,y) -  \Gamma(t,x_2;s,y)|\right)^\gamma.
 \end{align*}
Consider the function
$\varphi(\lambda) = \Gamma(t,x_1+\lambda(x_2-x_1); s,y)$, $\lambda\in\, ]0,1[$,
introduced in the proof of Lemma \ref{ch5-p30}. Then
\begin{align*}
&\int_0^t ds\, (t-s)^{\delta-1} \int_D dy\,  \vert \Gamma(t,x_1;s,y) - \Gamma(t,x_2;s,y)\vert\\
&\qquad \le C \int_0^t ds \, (t-s)^{\delta-1}  \left(\int_D dy\, |\Gamma(t,x_1;s,y) -  \Gamma(t,x_2;s,y)|\right)^\gamma\\
&\qquad \le C |x_1-x_2|^\gamma\int_0^t ds\,  (t-s)^{\delta-1}\\
&\qquad\qquad\times \left[\int_D dy  \int_0^1 d\lambda\,  (t-s)^{-\frac{k+1}{2}}\exp\left(-c\,\frac{|x_1+\lambda(x_2-x_1)-y|^2}{t-s}\right)\right]^\gamma\\
 &\qquad\le C  |x_1-x_2|^\gamma\int_0^t ds\,  (t-s)^{\delta-1-\frac{\gamma}{2}}\\
 &\qquad\qquad\times  \left[\int_0^1 d\lambda \ \int_D dy\,  (t-s)^{-\frac{k}{2}}\exp\left(-c\,\frac{|x_1+\lambda(x_2-x_1)-y|^2}{t-s}\right)\right]^\gamma,
 \end{align*}
 where in the last step, we have used Fubini's theorem.
 By integrating the Gaussian density, we see that the integral within brackets is finite. As for the remaining integral, it is finite since
 $\delta-1-\frac{\gamma}{2}>-1$. This ends the proof of Lemma \ref{ch5-p32}.
  \end{proof}

\begin{lemma}
\label{ch5-p33}
(Time increments) The hypotheses are as in Theorem \ref{ch5-t30}, and $(u(t,x),\, \txind)$ is defined in \eqref{5.301}.
Then, for all  $\gamma\in \left]0,\frac{1-\eta}{2}\right[$ and all $p\in [2,\infty[$, there is a positive constant $C$ such that, for every $t_1, t_2\in [0,T]$, 
\begin{align}
\label{5.324}
&\sup_{x\in D} E\left(\left\vert \int_0^{t_1}   \int_{D}  \Gamma(t_1,x;s,y) \sigma(s, y,u(s,y))\, W(ds,dy)\right.\right.\notag\\
&\left.\left.\qquad \qquad - \int_0^{t_2}   \int_{D}  \Gamma(t_2,x;s,y)]\sigma(s, y,u(s,y))\, W(ds,dy)\right\vert^p\right)\notag\\
 &\qquad \le C |t_1-t_2|^{\gamma p}.
 \end{align}
 \end{lemma}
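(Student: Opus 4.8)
The plan is to mimic the strategy used for the spatial increments in Lemma~\ref{ch5-p32}, replacing the role of the spatial difference of $\Gamma$ by the time difference, and splitting the time increment into the two natural pieces coming from the change in the upper limit of integration and from the change in $\Gamma$ itself. Write $t_1 = t+h$, $t_2 = t$ with $t, t+h \in [0,T]$, $h>0$. Fix $\gamma \in \left]0,\frac{1-\eta}{2}\right[$ and choose an auxiliary exponent $\delta \in \left]\gamma, \frac{1-\eta}{2}\right[$, so that in particular $2\delta < 1-\eta$ and Proposition~\ref{ch5-p30} (the factorization formula) applies with this $\delta$ to $Z(s,y) = \sigma(s,y,u(s,y))$; note that by $({\bf H_L})$ and \eqref{5.302}, $\sup_{(s,y)} E(|Z(s,y)|^p) < \infty$, so $Y_\delta$ is well-defined and satisfies \eqref{5.3130}.

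The main decomposition I would use: apply \eqref{5.314} to rewrite each of the two stochastic integrals in \eqref{5.324} as a pathwise convolution of $\Gamma$ against $Y_\delta$, and then write the difference as
\begin{align*}
&\int_0^{t+h} \!\! ds\,(t+h-s)^{\delta-1}\!\!\int_D \!dy\,\Gamma(t+h,x;s,y)Y_\delta(s,y) \\
&\qquad - \int_0^{t} \!\! ds\,(t-s)^{\delta-1}\!\!\int_D \!dy\,\Gamma(t,x;s,y)Y_\delta(s,y) = B_1 + B_2,
\end{align*}
where $B_1$ collects the contribution of the interval $[t,t+h]$ (with kernel $(t+h-s)^{\delta-1}\Gamma(t+h,x;s,y)$) and $B_2$ collects $\int_0^t ds\big[(t+h-s)^{\delta-1}\Gamma(t+h,x;s,y) - (t-s)^{\delta-1}\Gamma(t,x;s,y)\big]Y_\delta(s,y)$. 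For $B_1$, I apply H\"older's inequality with respect to the finite measure $(t+h-s)^{\delta-1}\Gamma(t+h,x;s,y)\,ds\,dy$ on $[t,t+h]\times D$, use \eqref{5.3130} to control $E(|Y_\delta|^p)$, and bound $\int_t^{t+h} ds\,(t+h-s)^{\delta-1}\int_D dy\,\Gamma(t+h,x;s,y) \le C\int_t^{t+h}(t+h-s)^{\delta-1}ds \le C h^\delta$ using \eqref{boundgauss}; since $\delta > \gamma$ and $h\le T$, this gives $E(|B_1|^p) \le C h^{\gamma p}$. For $B_2$, I further split $(t+h-s)^{\delta-1}\Gamma(t+h,x;s,y) - (t-s)^{\delta-1}\Gamma(t,x;s,y) = (t+h-s)^{\delta-1}[\Gamma(t+h,x;s,y) - \Gamma(t,x;s,y)] + [(t+h-s)^{\delta-1} - (t-s)^{\delta-1}]\Gamma(t,x;s,y)$; H\"older in the measure $(t-s)^{\delta-1}ds\,dy$ (respectively a bounded measure for the second piece) together with \eqref{5.3130} reduces the estimate of $E(|B_2|^p)$ to bounding, to the $p$-th power, the two deterministic integrals $\int_0^t ds\,(t+h-s)^{\delta-1}\int_D dy\,|\Gamma(t+h,x;s,y)-\Gamma(t,x;s,y)|$ and $\int_0^t ds\,|(t+h-s)^{\delta-1} - (t-s)^{\delta-1}|\int_D dy\,\Gamma(t,x;s,y)$.

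For the first of these two deterministic integrals, I would invoke Lemma~\ref{ch6-l1}(a), which gives exactly $\le C_\delta h^\delta \le C h^\gamma$ (using $(t+h-s)^{\delta-1}\le (t-s)^{\delta-1}$ on $[0,t]$ to reduce to the form appearing there), and for the second, I use \eqref{boundgauss} to dispose of the spatial integral and then the elementary estimate $\int_0^t |(t+h-s)^{\delta-1} - (t-s)^{\delta-1}|\,ds = \int_0^t \big[(t-s)^{\delta-1} - (t+h-s)^{\delta-1}\big]\,ds$, whose value computes to $\frac{1}{\delta}\big[t^\delta - (t+h)^\delta + h^\delta\big] \le \frac{2}{\delta} h^\delta \le C h^\gamma$ by subadditivity of $r\mapsto r^\delta$. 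Combining, $E(|B_2|^p) \le C h^{\gamma p}$, and with $E(|B_1|^p)\le Ch^{\gamma p}$ this yields \eqref{5.324}.

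I expect the main obstacle to be bookkeeping rather than any deep difficulty: one must be careful that the H\"older exponents and the auxiliary parameter $\delta$ are chosen so that all the time integrals of the form $\int_0^t (t-s)^{(\delta-1)}\cdots\,ds$ stay integrable after the various powers are distributed, and that the inequality $\delta > \gamma$ (so that $h^\delta \le T^{\delta-\gamma}h^\gamma$) is used consistently — exactly the slack that Lemma~\ref{ch6-l1}(a) is designed to exploit, in contrast to part (b) which was needed for the pathwise integral where no such factorization gain is available. The one genuinely delicate point is ensuring that the factorization identity \eqref{5.314} may be applied at two different times with the \emph{same} $\delta$ and the same process $Y_\delta$, which is legitimate since $Y_\delta$ is defined on all of $[0,T]\times D$ independently of the time at which \eqref{5.314} is evaluated.
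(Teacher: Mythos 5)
Your proposal is correct and follows essentially the same route as the paper's proof: the same splitting into the $[t,t+h]$ contribution and the $[0,t]$ difference after applying the factorization identity \eqref{5.314}, H\"older's inequality with respect to the kernel measures together with \eqref{5.3130} and \eqref{boundgauss}, Lemma \ref{ch6-l1}(a) for the $\Gamma$-difference, and the explicit computation of $\int_0^t\bigl((t-s)^{\delta-1}-(t+h-s)^{\delta-1}\bigr)\,ds$. The only (immaterial) difference is that the paper takes $\delta=\gamma$ directly, whereas you take $\delta\in\,\left]\gamma,\frac{1-\eta}{2}\right[$ and absorb the surplus $h^{\delta-\gamma}$ into the constant.
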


 \begin{proof}
As in the proof of Lemma \ref{ch5-p31}, we write $t_1 = t+h$, $t_2 = t$, where $t\in[0,T]$ and $h>0$ is such that $t+h\le T$. Fix $\delta=\gamma\in \left]0,\frac{1-\eta}{2}\right[$ and use formula \eqref{5.314} and the triangle inequality to obtain
\begin{align*}
&E\left(\left\vert \int_0^{t+h}   \int_{D}  \Gamma(t+h,x;s,y) \sigma(s, y,u(s,y)) \, W(ds,dy)\right.\right.\notag\\
&\left.\left. \qquad \qquad - \int_0^{t}   \int_{D}  \Gamma(t,x;s,y)]\sigma(s, y,u(s,y))\, W(ds,dy)\right\vert^p\right)\notag\\
&\qquad \le I_1(t,h,x) + I_2(t,h,x),
\end{align*}
where
\begin{align*}
I_1(t,h,x) & = E\left(\left\vert \int_t^{t+h} ds\, (t+h-s)^{\delta-1} \int_D dy\, \Gamma(t+h,x;s,y) Y_\delta(s,y)\right\vert^p\right),\\
I_2(t,h,x) & = E\left(\left\vert \int_0^t ds\,  (t+h-s)^{\delta-1}  \int_D dy\, \Gamma(t+h,x;s,y) Y_\delta(s,y)\right.\right.\\
& \left.\left.\qquad  -\int_0^t ds\,  (t-s)^{\delta-1}  \int_D dy\, \Gamma(t,x;s,y)Y_\delta(s,y)\right\vert^p\right).
\end{align*}
 For the study of $I_1(t,h,x)$, we apply H\"older's inequality with respect to the measure on $[t,t+h]\times D$ given by
$(t+h-s)^{\delta-1} \Gamma(t+h,x;s,y)ds dy$, along with \eqref{5.3130}, to obtain
\begin{align}
I_1(t,h,x) & \le \sup_{\txind} E(|Y_\delta(t,x)|^p)\notag\\
&\qquad\times \left(\int_t^{t+h} ds\,  (t+h-s)^{\delta-1} \int_D dy\, \Gamma(t+h,x;s,y)\right)^p\notag\\
& \le C  \left(\int_t^{t+h} ds\,  (t+h-s)^{\delta-1} \right)^p \notag\\
& \le C h^{\delta p},
\label{5.325}
\end{align}
 where we have also used  \eqref{boundgauss}.

 Next, we study $I_2(t,h,x)$. As before, we start by applying H\"older's inequality and then the triangle inequality and \eqref{boundgauss}, to obtain
  \begin{align}\notag
 & I_2(t,h,x)\\
  &\qquad \le C\sup_{\txind} E(|Y_\delta(t,x)|^p)%\notag\\
 %& \qquad\qquad\qquad \times
 \left[\left(\int_0^t ds\,  |(t+h-s)^{\delta-1}-  (t-s)^{\delta-1}|\right)^p\right.\notag\\
 &\qquad\qquad\left. + \left(\int_0^t ds\,  (t+h-s)^{\delta-1} \int_D dy\, |\Gamma(t+h,x;s,y)-\Gamma(t,x;s,y)|\right)^p\right].
\label{5.326}
\end{align}
Since $\delta<1$, we have
\begin{align}
\label{5.327}
& \int_0^t ds\, |(t+h-s)^{\delta-1}-  (t-s)^{\delta-1}| \\  \notag
 & \qquad = \int_0^t ds\ ((t-s)^{\delta-1} - (t+h-s)^{\delta-1})
 = \int_0^t ds\ (s^{\delta-1} - (s+h)^{\delta-1}) \\
 &\qquad = \frac{1}{\delta}\, (t^\delta - (t+h)^\delta + h^\delta)\notag\\
& \qquad \le c\, h^\delta.
%&\quad \le C \int_0^t ds (t-s)^{(\delta-1)(1-\gamma)}|(t+h-s)^{\delta-1}-  (t-s)^{\delta-1}|^\gamma\notag\\
%&\quad \le C h^\gamma\left(\int_0^t ds(t-s)^{(\delta-1)
%(1-\gamma)+(\delta-2)\gamma} \right)\le Ch^\gamma,
\end{align}
%because $(\delta-1)
%(1-\gamma)+(\delta-2)\gamma = \delta-\gamma-1>-1$.
This provides a control on the term in the second line of \eqref{5.326}.

%%%% nova escriptura
As for the term in the third line of \eqref{5.326}, we observe that since $\delta < 1$, it is bounded by
\beqn
\left(\int_0^t ds\,  (t-s)^{\delta-1} \int_D dy\  |\Gamma(t+h,x;s,y)-\Gamma(t,x;s,y)|\right)^p.
\eeqn
Thus, by applying Lemma \ref{ch6-l1} (a), we see that it is bounded above by $Ch^{\delta p}$.

The proposition is  proved.
\end{proof}
 Propositions \ref{ch5-p32} and \ref{ch5-p33} yield the following.

\begin{prop}
\label{rd17.10p2}
The hypotheses are as in Theorem \ref{ch5-t30}. Let the process $(\cI(t,x))$ be as defined in \eqref{i-si}.
For any $p\in [2,\infty[$, $\gamma_1\in \left]0, \frac{1-\eta}{2}\right[$ and $\gamma_2\in\  ]0, 1-\eta[$, there is $C< \infty$ such that for all $(t,x), (s,y) \in [0,T]\times D$,
\beq
 \label{5.325}
 E(|\cI(t,x)-\cI(t,y)|) \le C (|t-s|^{\gamma_1p} + |x-y|^{\gamma_2 p}).
\eeq
%where the process $(\cI(t,x))$ is defined in \eqref{i-si}.
\end{prop}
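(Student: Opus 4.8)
The plan is to reduce the joint space-time increment of $\cI$ to the sum of a purely spatial increment at a frozen time and a purely temporal increment at a frozen spatial point, and then to invoke Lemmas \ref{ch5-p32} and \ref{ch5-p33}. Concretely, fix $p\in[2,\infty[$, $\gamma_1\in\left]0,\frac{1-\eta}{2}\right[$ and $\gamma_2\in\,]0,1-\eta[$, and let $(t,x),(s,y)\in[0,T]\times D$. First I would insert the intermediate point $\cI(s,x)$ and write
\[
\cI(t,x)-\cI(s,y) = \bigl[\cI(t,x)-\cI(s,x)\bigr] + \bigl[\cI(s,x)-\cI(s,y)\bigr].
\]
Applying the elementary inequality $|a+b|^p\le 2^{p-1}(|a|^p+|b|^p)$ then gives
\[
E\bigl(|\cI(t,x)-\cI(s,y)|^p\bigr)\le 2^{p-1}\,E\bigl(|\cI(t,x)-\cI(s,x)|^p\bigr) + 2^{p-1}\,E\bigl(|\cI(s,x)-\cI(s,y)|^p\bigr).
\]

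Next, I would estimate the two terms separately. The first term is the time increment of $\cI$ at the fixed spatial point $x$, i.e.\ exactly the quantity controlled in Lemma \ref{ch5-p33} with $t_1=t$, $t_2=s$; since the bound there is uniform in $x\in D$, this yields $E(|\cI(t,x)-\cI(s,x)|^p)\le C\,|t-s|^{\gamma_1 p}$. The second term is the spatial increment of $\cI$ at the fixed time $s$, i.e.\ the quantity controlled in Lemma \ref{ch5-p32} with $x_1=x$, $x_2=y$; since that bound is uniform in $t\in[0,T]$, this yields $E(|\cI(s,x)-\cI(s,y)|^p)\le C\,|x-y|^{\gamma_2 p}$. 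Adding the two contributions and relabelling the constant gives \eqref{5.325}.

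I do not expect any genuine obstacle in this step: all the analytic work—the factorization identity \eqref{5.314}, Burkholder's and H\"older's inequalities, the $U$-norm computations via Proposition \ref{ch2-incluU} and Lemma \ref{l-1}, the uniform moment bound \eqref{5.3130}, and the Gaussian-kernel estimates of Lemma \ref{ch6-l1}—has already been carried out inside Lemmas \ref{ch5-p32} and \ref{ch5-p33}. The only point requiring a little care is to keep track of the suprema over the frozen variable (the $\sup_{t\in[0,T]}$ in Lemma \ref{ch5-p32} and the $\sup_{x\in D}$ in Lemma \ref{ch5-p33}), so that both one-variable estimates can be applied for an \emph{arbitrary} pair of points $(t,x)$ and $(s,y)$; both lemmas are stated with precisely those suprema, so the combination is immediate.
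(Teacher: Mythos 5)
Your proposal is correct and is exactly the argument the paper intends: the paper simply states that Lemmas \ref{ch5-p32} and \ref{ch5-p33} ``yield'' Proposition \ref{rd17.10p2}, the implicit proof being precisely your decomposition through the intermediate point $\cI(s,x)$, the inequality $|a+b|^p\le 2^{p-1}(|a|^p+|b|^p)$, and the uniformity of the two one-variable bounds in the frozen variable. (Incidentally, your reading silently corrects the typographical slip in the statement, where the left-hand side should be $E(|\cI(t,x)-\cI(s,y)|^p)$.)
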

\medskip

% \begin{proof} {\em of Theorem \ref{ch5-t30}}.
\noindent{\em Proof of Theorem \ref{ch5-t30}}. By \eqref{5.301},
$u(t, x) = I_0(t, x) + \cI(t, x) + \cJ(t, x)$.
Therefore, inequality \eqref{5.312} is a consequence of Propositions \ref{prop4.9rd} and \eqref{5.325}.

The statement concerning the existence of a H\"older continuous version of $(u(t, x) - I_0(t, x))$ is a consequence of the anisotropic Kolmogorov's continuity criterion \cite[Theorem A.2.1]{d-ss-2024}.
\hfill $\Box$
% \end{proof}

{\bf Author information:}
\vskip 16pt

{\scshape
Robert C.~Dalang}

Institut de Mathématiques

\'Ecole Polytechnique Fédérale de Lausanne (EPFL)

CH-1015 Lausanne

Switzerland
\smallskip %vskip 10pt

robert.dalang@epfl.ch

\vskip 16pt

{\scshape
Marta Sanz-Sol\'e}

Facultat de Matemàtiques i Informàtica

Universitat de Barcelona

Gran Via de les Corts Catalanes 585

08007 Barcelona

Spain
\smallskip %\vskip 10pt

marta.sanz@ub.edu

\end{document}